\newcommand{\nc}{\newcommand}
\newcommand{\delete}[1]{}
\nc{\mlabel}[1]{\label{#1}}  
\nc{\mcite}[1]{\cite{#1}}  
\nc{\mref}[1]{\ref{#1}}  
\nc{\meqref}[1]{\eqref{#1}}  
\nc{\mbibitem}[1]{\bibitem{#1}} 
\nc{\mlabel}[1]{\label{#1}  
{\hfill \hspace{1cm}{\small\tt{{\ }\hfill(#1)}}}}
\nc{\mcite}[1]{\cite{#1}{\small{\tt{{\ }(#1)}}}}  
\nc{\mref}[1]{\ref{#1}{{\tt{{\ }(#1)}}}}  
\nc{\meqref}[1]{\eqref{#1}{{\tt{{\ }(#1)}}}}  
\nc{\mbibitem}[1]{\bibitem[\bf #1]{#1}} 
\newtheorem{theorem}{Theorem}[section]
\newtheorem{prop}[theorem]{Proposition}
\newtheorem{lemma}[theorem]{Lemma}
\newtheorem{coro}[theorem]{Corollary}
\theoremstyle{definition}
\newtheorem{defn}[theorem]{Definition}
\newtheorem{remark}[theorem]{Remark}
\newtheorem{exam}[theorem]{Example}
\newtheorem{prop-def}{Proposition-Definition}[section]
\newcommand\alphlist{a,b,c,d,e,f,g,h,i,j,k,l,m,n,o,p,q,r,s,t,u,v,w,x,y,z}
\newcommand\Alphlist{A,B,C,D,E,F,G,H,I,J,K,L,M,N,O,P,Q,R,S,T,U,V,W,X,Y,Z}
\newcommand\getcmds[3]{\expandafter\newcommand\csname #2#1\endcsname{#3{#1}}}
\alphlist\do{\expandafter\getcmds\expandafter{\x}{frak}{\mathfrak}}
\Alphlist\do{\expandafter\getcmds\expandafter{\x}{frak}{\mathfrak}}
\nc{\name}[1]{{\bf #1}}
\nc{\bfk}{{\bf k}}
\font\cyr=wncyr10
\newfont{\scyr}{wncyr10 scaled 550}
\nc{\sha}{\mbox{\cyr X}}
\nc{\ssha}{\mbox{\bf \scyr X}}
\nc{\Id}{\mathrm{Id}}
\nc{\lbar}[1]{\overline{#1}}
\nc{\leaf}{\mathrm{leaf}}
\nc{\fax}{\mathcal{F}(X)} 
\nc{\pfa}{\mathcal{PF}}
\nc{\shpr}{\diamond}    
\nc{\ot}{\otimes}      
\nc{\dep}{\mathrm{dep}} 
\nc{\var}{\varepsilon} 
\nc{\id}{\mathrm{id}}  
\nc{\set}{\mathbf{Set}} 
\nc{\vect}{\mathbf{Vect}} 
\nc{\spep}{\mathbf{p}} 
\nc{\speq}{\mathbf{q}} 
\nc{\spei}{\mathbf{1}_{\bfk}} 
\nc{\Hom}{\mathrm{Hom}}
\nc{\calc}{\mathcal{C}}
\nc{\adf}{\mathrm{ADF}}
\nc{\pure}{simple\xspace}
\nc{\Coinv}{\mathrm{Coinv}}
\nc{\ff}{\mathcal{F}}
\nc{\bff}{\widehat{\mathcal{F}}}
\nc{\com}{\mathrm{Com}}
\nc{\comp}{\mathrm{Comp}}
\nc{\rbs}{\text{Rota-Baxter species}}
\newcommand{\un}{\mathbf{1}}
\newcommand{\F}{\widehat{\mathcal{F}}}
\newcommand{\N}{\mathbb{N}}
\renewcommand{\geq}{\geqslant}
\renewcommand{\leq}{\leqslant}
\nc{\li}[1]{\textcolor{red}{#1}}
\nc{\lir}[1]{\textcolor{red}{Li:#1}}
\nc{\peng}[1]{\textcolor{purple}{Peng:#1}}
\nc{\yi}[1]{\textcolor{cyan}{Yi:#1}}
\nc{\revise}[1]{\textcolor{red}{#1}}
\begin{document}

\title[Species of Rota-Baxter algebras by rooted trees and twisted bialgebras]{Species of Rota-Baxter algebras by rooted trees, twisted bialgebras and Fock functors}
	

\author{Lo\"\i c Foissy}
\address{Univ. Littoral C\^ote d'Opale, UR 2597 LMPA, Laboratoire de Math\'ematiques Pures et Appliqu\'ees Joseph Liouville F-62100 Calais, France}
\email{loic.foissy@univ-littoral.fr}

\author{Li Guo}
\address{
	Department of Mathematics and Computer Science,
	Rutgers University,
	Newark, NJ 07102, United States}
\email{liguo@rutgers.edu}

\author{Xiao-song Peng}
\address{School of Mathematics and Statistics,
Jiangsu Normal University, Xuzhou, Jiangsu 221116, P.\,R. China}
\email{pengxiaosong3@163.com}

\author{Yunzhou Xie}
\address{Department of Mathematics, Imperial College London, London SW7 2AZ, UK}
\email{yx3021@ic.ac.uk}

\author{Yi Zhang}
\address{School of Mathematics and Statistics,
	Nanjing University of Information Science \& Technology, Nanjing, Jiangsu 210044, P.\,R. China}
\email{zhangy2016@nuist.edu.cn}

\date{\today}
\begin{abstract} As a fundamental and ubiquitous combinatorial notion, species has attracted sustained interest, generalizing from set-theoretical combinatorial to algebraic combinatorial and beyond. The Rota-Baxter algebra is one of the algebraic structures with broad applications from Renormalization of quantum field theory to integrable systems and multiple zeta values. Its interpretation in terms of monoidal categories has also recently appeared. This paper studies species of Rota-Baxter algebras, making use of the combinatorial construction of free  Rota-Baxter algebras in terms of angularly decorated trees and forests. The notion of \pure angularly decorated forests is introduced for this purpose and the resulting Rota-Baxter species is shown to be free. Furthermore, a twisted bialgebra structure, as the bialgebra for species, is established on this free Rota-Baxter species. Finally, through the Fock functor, another proof of the bialgebra structure on free Rota-Baxter algebras is obtained.
\end{abstract}

\subjclass[2010]{
18M80, 
17B38, 
18D10, 
05C05, 
16S10, 
16T10, 
08B20	
\!}

\keywords{Species, monoidal category, Rota-Baxter algebra, rooted tree, bialgebra, Fock functor}

\maketitle

\tableofcontents

\setcounter{section}{0}

\allowdisplaybreaks

\section{Introduction}
This paper introduces the species of Rota-Baxter algebras, by equipping the species of angularly decorated rooted trees with the operations for a Rota-Baxter algebra, which is further enriched to a twisted bialgebra structure. Through the Fock functors, we obtain another proof of the bialgebra structure on free Rota-Baxter algebras on rooted trees.

\subsection{Species and monoidal categories}
The concept of species was introduced by A. Joyal in his seminal paper~\mcite{Jo}, formulated as functors from the category of finite sets with bijective maps to the category of sets, providing a way to categorize combinatorial objects based on their structures. The theory was further developed as a fundamental tool in combinatorics and several other areas of mathematics, providing deep insights into the nature of combinatorial objects and their relationships~\mcite{ALL, BLL, FGHW, Ges, MM}.

Vector species can be viewed as objects in a category, with morphisms given by natural transformations. This category is endowed with a symmetric monoidal structure, which gives rise to various interesting notions, such as monoids, comonoids, bimonoids, and Hopf monoids within the context of species. A general structured theory of these algebraic structures, internal to the functor category of species, has been developed by Aguiar and Mahajan~\mcite{AM10,AM12}.
Furthermore, there are natural functors that transform vector species into graded vector spaces, and Hopf monoids in species into graded Hopf algebras. The constructions and properties of Hopf algebras, when lifted to the species level, are often more accessible and easier to generalize through the use of these functors. In the other direction, through Fock functors, these enriched species with additional structures can be regarded as lifts of these structures in their original (graded) form.

As further enrichment and extensions of the classical species theory, species with additional structures, such as the associative algebra and bialgebra, were studied recently~\mcite{Car, Foi, Mar, Mar2, Nor, Ta, Wh}.
In particular, ~\mcite{Foi} studied bialgebras in the category of linear species under the name of twisted bialgebras and gave a
species version of the chromatic symmetric series and chromatic polynomials.
Later, Tamaroff~\mcite{Ta} gave a Koszul duality theory between twisted algebras and coalgebras on the twisted coalgebra that gives rise to their cohomology theory to give a new alternative description of it. At the same time,  Carlier~\mcite{Car} showed that Schmitt's hereditary species induce monoidal decomposition spaces and exemplify Schmitt's bialgebra construction as a case of the general bialgebra construction on these spaces.
Furthermore, Cohen-Macaulay Hopf monoids in the category of species was studied in~\mcite{Wh}.

Species theory has found applications in various areas, including graph theory, algebra, and combinatorial enumeration, providing deep insights into the nature of combinatorial objects and their relationships.
Recently, Norledge~\mcite{Nor} developed an algebraic framework for perturbative quantum field theory (pQFT) based on Joyal's combinatorial species. He demonstrated that fundamental pQFT structures can be viewed as algebraic structures within species, constructed using the Cauchy monoidal product.

\subsection{Species and Rota-Baxter algebras}

Rota-Baxter algebra was introduced by G. Baxter from fluctuation theory in probability~\mcite{Ba}.  Afterwards,  it attracted attentions of prominent mathematicians such as Atkinson, Cartier and Rota~\mcite{Ca,Ro,Ro2}.  In the early 1980s,
Rota-Baxter operators on Lie algebras were independently discovered by Semenov-Tian-Shansky in~\mcite{STS} as the operator form of the classical Yang-Baxter equation~\mcite{BD}, named after the physicists C. N. Yang and R. J. Baxter.

Over the past two decades, the study of Rota-Baxter algebras has undergone an extraordinary renascence thanks to broad applications and connections such as  Renormalization of quantum field theory~\mcite{CK}, Yang-Baxter equations~\mcite{Bai,BG,Bor,GS,GG}, operads~\mcite{Ag,BBGN,WZ},
combinatorics~\mcite{AMo,YGT}, Lie groups~\mcite{GLS,LS}, deformation theories~\mcite{Da,TBGS} and Hopf algebras~\mcite{AGKO,BGM,ZGG}. See~\mcite{Gub} for an introduction.

The category of species is a monoidal category~\mcite{AM10}. Thus monoids and Hopf monoids can be defined for species.  On the other hand, Rota-Baxter operators have been defined for monoidal categories in~\mcite{Ag2,St}. 

In this paper, we combine species and Rota-Baxter algebras to define Rota-Baxter species by the Cauchy product, yielding a species with a Rota-Baxter algebra structure. In particular, we equipped angularly decorated forests with a free Rota-Baxter species, making use of the combinatorial constructions of free Rota-Baxter algebras given by rooted trees and angularly decorated forests~\mcite{AMo,EG,Gub,ZGG,ZXG}. A twisted bialgebra on the free Rota-Baxter species is also obtained, which recovers the biaglebra structure on free Rota-Baxter algebras. In a related work~\mcite{GGRZZ}, the rings of various species (set species, linear species and localized set species) are equipped with the structures of an integro-differential ring~\mcite{RR} or a modified integro-differential ring~\mcite{GGL}. 

Here is an outline of the paper. In Section~\mref{sec:a}, we recall the needed preliminaries on linear species. Then  we use the Rota-Baxter operators on monoidal categories to define Rota-Baxter species and give the notion of free Rota-Baxter species. We construct a Rota-Baxter species on what we call \pure angularly decorated forests and show that it is the free Rota-Baxter species.

In Section~\mref{sec:ad}, we show that a species of bialgebras, namely a twisted bialgebra, can be given on the above-obtained free Rota-Baxter species. This is accomplished by making use of the universal property of the free Rota-Baxter species, which allows us to obtain the existence of the coproduct and its coassociativity.

In Section~\mref{sec:fock}, we show that the (Bosonic) Fock functor from a species to its associated graded space can be enrich to a functor from a Rota-Baxter species to a Rota-Baxter algebra on the associated grades space. As a special case, the Fock functor applied to the free Rota-Baxter species recovers the construction of free Rota-Baxter algebras previously obtained by a direct construction~\mcite{EG,Gub,ZXG}.

\smallskip

\noindent
{\bf Notations.} In this paper, we fix a base field $\bfk$ of characteristic zero. All linear spaces, linear maps and tensor products are taken on this base field.

\section{Rota-Baxter algebras on species}\mlabel{sec:a}
This section introduces Rota-Baxter species and constructs free Rota-Baxter species by angularly decorated trees.

\subsection{Species and Rota-Baxter species}
First we recall basic notions and facts about species that will be used later. For more details, see~\cite{AM12,BLL,Jo}.

\begin{defn}
Let $\set$ be the category whose objects are finite sets and whose
morphisms are bijections, and let $\vect$ be the category whose
objects are vector spaces and whose morphisms are linear maps.
\begin{enumerate}
\item A {\bf species} is a functor
$$\spep:\set \longrightarrow \vect.$$

\item For a species $\spep$, its value on a finite set $X$ is denoted by $\spep[X]$.  Its value on a bijection  $\sigma: X \rightarrow Y$ is given by
    $$\spep[\sigma]:\spep[X] \rightarrow \spep[Y].$$
\item For any finite set $X$, we have $\spep[\id_X]=\id_{\spep[X]}$. For any bijections $\sigma: X \rightarrow Y$ and $\tau: Y \rightarrow Z$, we have
    $$\spep[\tau \circ \sigma]=\spep[\tau] \circ \spep[\sigma].$$

\item A {\bf morphism between species} is a natural transformation of the functors for the two species. More precisely, for two species $\spep$ and $\speq$, a {\bf morphism $f: \spep \rightarrow \speq$ of species} is a collection of linear maps
\begin{align*}
f_X: \spep[X] \rightarrow \speq[X],
\end{align*}
one for each finite set $X$ such that, for each bijection $\sigma: X \rightarrow Y$ of finite sets, the following diagram commutes.
\[
\begin{tikzpicture}
\node (P0) at (150:2cm) {$\spep[X]$};
\node (P1) at (210:2cm) {$\spep[Y]$} ;
\node (P2) at (330:2cm) {$\speq[Y]$};
\node (P3) at (30:2cm) {$\speq[X]$};
\draw
(P0) edge[->] node[left] {$\spep[\sigma]$} (P1)
(P1) edge[->] node[below] {$f_Y$} (P2)
(P0) edge[->] node[above] {$f_X$} (P3)
(P3) edge[->] node[right] {$\speq[\sigma]$} (P2);
\end{tikzpicture}
\]

\item If the linear maps $f_X: \spep[X] \rightarrow \speq[X]$ in a morphism between species are bijections, then the morphism is called an {\bf isomorphism between species}. 

\item For any element $k \in \bfk$ and any morphisms of species $f,g: \spep \rightarrow \speq$, define the morphisms $k f, f+g: \spep \rightarrow \speq$ by
\begin{align*}
(kf)_X :=k f_X \,\, \text{ and} \,\, (f+g)_X :=f_X+ g_X,
\end{align*}
for each finite set $X$.
Then the category of species is a \bfk-linear category.
\item Given two species $\spep$ and $\speq$, the {\bf Cauchy product} $\spep \ot \speq$ is defined by
\begin{align*}
\spep \ot \speq [X] := \bigoplus_{X_1 \sqcup X_2=X} \spep[X_1] \ot \speq[X_2],
\end{align*}
for any finite set $X$.

\item Denote by $\spei$ the species defined by
\begin{equation}
\spei[X] :=
\left\{
\begin{array}{ll}
\bfk, & \text{if $X=\emptyset$}, \\
0 , &  \text{otherwise}.\\
\end{array}
\right .
\end{equation}
Then for any species $\spep$, there is
$$\spep \ot \spei= \spep =\spei \ot \spep.$$
Moreover, the category of species is a monoidal category under the Cauchy product, with the unit object $\spei$~\cite{AM12}.
\end{enumerate}
\end{defn}

\begin{exam} Here are two examples of species~\mcite{Foi}.
\begin{enumerate}
\item The species $\com$ is defined by $\com[X]=\bfk$ for any finite set $X$ and $\com[\sigma]=\id_{\bfk}$ for any bijection $\sigma: X \rightarrow Y$.

\item For any finite set $X$, a set composition of $X$ is a finite sequence $(X_1, \ldots,X_k)$ of nonempty subset of $X$ such that $X_1 \sqcup \cdots \sqcup X_k=X$. Denote by $\comp[X]$ the set of set compositions of $X$. Then define the species $Comp$ as follows. For any finite set $X$, define $Comp[X]$ to be the space generated by $\comp[X]$; while for any set bijection $\sigma: X \rightarrow Y$ and $(X_1, \ldots, X_k) \in \comp[X]$, define
$$Comp[\sigma](X_1, \ldots, X_k)=(\sigma(X_1), \ldots, \sigma(X_k)) \in Comp[Y].$$
\end{enumerate}
\end{exam}

As the category of species is a monoidal category, the notion of algebras is defined in the category of species as follows~\mcite{Foi}.

\begin{defn}~\mcite{Foi}
An {\bf algebra in the category of species}, or {\bf twisted algebra}, is a triple $(\spep, m,\ell)$, where $\spep$ is a species, $m: \spep \ot \spep \rightarrow \spep$ and $\ell: \spei \rightarrow \spep$ are morphisms of species such that
\begin{align*}
m \circ (m \ot \id_{\spep})=m \circ (\id_{\spep} \ot m) \, \text{ and } \, m \circ (\ell \ot \id_{\spep})=\id_{\spep}= m \circ (\id_{\spep} \ot \ell).
\end{align*}
\end{defn}

Equivalently, a twisted algebra $\spep$ satisfies that for any two finite sets $X$ and $Y$, there is a map $m_{X,Y}: \spep[X] \ot \spep[Y] \rightarrow \spep[X \sqcup Y]$ such that
\begin{enumerate}
\item For any two bijections $\sigma: X \rightarrow X'$ and $\tau: Y \rightarrow Y'$ of finite sets,
\begin{align*}
m_{X',Y'} \circ (\spep[\sigma] \ot \spep[\tau])= \spep[\sigma \sqcup \tau] \circ m_{X,Y}: \spep[X] \ot \spep[Y] \rightarrow \spep[X' \sqcup Y'],
\end{align*}
where $\sigma \sqcup \tau: X \sqcup Y \rightarrow X' \sqcup Y'$ is the bijection induced by $\sigma$ and $\tau$.

\item For finite sets $X,Y,Z$,
\begin{align*}
m_{X \sqcup Y, Z} \circ (m_{X,Y} \ot \id_{\spep[Z]})=m_{X, Y \sqcup Z} \circ (\id_{\spep[X]} \ot m_{Y,Z}).
\end{align*}

\item There is an element $1_{\spep} \in \spep[\emptyset]$ such that for any finite set $X$ and any element $u \in \spep[X]$,
\begin{align*}
m_{\emptyset, X} (1_{\spep} \ot u)=x =m_{X,\emptyset}(u \ot 1_{\spep}).
\end{align*}
\end{enumerate}

\begin{remark}~\mcite{Foi} \label{rem:definem}
Let $\spep$ be a twisted algebra and $X$ be a finite set. For $(X_1, \ldots, X_k) \in Comp[X]$, there is a map $m_{X_1, \ldots, X_k}$ from $\spep[X_1] \otimes \cdots \otimes \spep[X_k]$ to $\spep[X]$ inductively defined by
\begin{align*}
m_X=&\ \id_{\spep[X]},\\
m_{X_1, \ldots, X_k}=&\ m_{X_1 \sqcup X_2, X_3, \ldots, X_k} \circ (m_{X_1, X_2} \otimes \id_{\spep[X_3]} \otimes \cdots \otimes \id_{\spep[X_k]}).
\end{align*}
By associativity, if $(X_1, \ldots, X_{k+\ell}) \in Comp[X]$, then
\begin{align*}
m_{X_1 \sqcup \cdots \sqcup X_k, X_{k+1} \sqcup \cdots \sqcup X_{k+\ell}} \circ (m_{X_1, \ldots, X_{k}} \otimes m_{X_{k+1}, \ldots, X_{k+ \ell}})=m_{X_1, \ldots, X_{k+\ell}}.
\end{align*}
\end{remark}

\begin{exam}
The species $\spei$ is a twisted algebra, where for $k_1 \in \spei[\emptyset]=\bfk$ and $k_2 \in \spei[\emptyset]=\bfk$,
\begin{align*}
m_{\emptyset,\emptyset}(k_1 \ot k_2)=k_1 k_2 \in \spei[\emptyset]=\bfk.
\end{align*}
The unit of $\spei$ is $1_{\bfk} \in \bfk=\spei[\emptyset] $.
\end{exam}

Rota-Baxter operators have been defined for monoidal categories in~\mcite{Ag2,St}. As the category of species is a monoidal category, we give the concept of Rota-Baxter species as follows.

\begin{defn}
Fix a scalar $\lambda$. A {\bf Rota-Baxter algebra of weight $\lambda$ in the category of species}, or {\bf Rota-Baxter species of weight $\lambda$}, is a quadruple $(\spep, m, \ell, R)$, where $(\spep, m, \ell)$ is a twisted algebra and $R: \spep \rightarrow \spep$ is a morphism of species such that the following diagram commutes:
\[
\begin{tikzpicture}
\node (P0) at (20:2.4cm) {$\spep \ot \spep $};
\node (P1) at (170:5cm) {$\spep \ot \spep$} ;
\node (P2) at (190:5cm) {$\spep \ot \spep$};
\node (P3) at (350:5cm) {$\spep$};
\node (P4) at (10:5cm) {$\spep$};
\draw
(P1) edge[->] node[above] {$R \ot \id_{\spep}+\id_{\spep}\ot R+\lambda \id_{\spep} \ot \id_{\spep}$} (P0)
(P1) edge[->] node[left] {$R \ot R$} (P2)
(P2) edge[->] node[below] {$m$} (P3)
(P0) edge[->] node[above] {$m$} (P4)
(P4) edge[->] node[right] {$R$} (P3);
\end{tikzpicture}
\]
\end{defn}
In more detail, the commutative diagram means that, for any two finite sets $X,Y$ and any elements $x \in \spep[X], y \in \spep[Y]$,
\begin{align*}
m_{X,Y} \circ (R_X \ot R_Y)(x \ot y)=R_{X \sqcup Y} \circ m_{X,Y} \circ (R_X \ot \id_{\spep[Y]}+ \id_{\spep[X]} \ot R_Y + \lambda \id_{\spep[X]} \ot \id_{\spep[Y]})(x \ot y).
\end{align*}

\begin{defn}
Let $(\spep, m, \ell, R)$ and $(\spep', m', \ell', R')$ be two $\rbs$  of the same weight, and let $f: \spep \to \spep'$ be a morphisms of species. We shall say that $f$ is a \textbf{morphism of \rbs} if 
\begin{align*}
f \circ m&=m' \circ (f \ot f),&\ell'&=f \circ \ell,& f \circ R&=R' \circ f.
\end{align*}
\end{defn}

Inspired by the concept of free Rota-Baxter algebras~\mcite{Gub},  we  also give the concept of free Rota-Baxter species as follows.

\begin{defn}
Let $P$ be a species. Let $(F,m,\ell, R)$ be a $\rbs$ and $i: P \rightarrow F$ be a morphism of species. We say that $F$ with the morphism $i: P \rightarrow F$ is a {\bf free \rbs} on $P$ if for any $\rbs$ $(Q,m,1_Q,R')$ and any morphism $g: P \rightarrow Q$ of species, there is a unique morphism of $\rbs$  $\overline{g}: F \rightarrow Q$ such that $g= \overline{g} \circ i$.
\end{defn}

\begin{remark}
If it exists, the pair $(F,i)$ is unique up to an isomorphism. Indeed, if $(F,i)$ and $(G,j)$ are two free Rota-Baxter species on $P$, using the species morphism $j:P\rightarrow G$, we obtain a morphism of Rota-Baxter species $\overline{j}:F\longrightarrow G$
such that $\overline{j}\circ i=j$. Using the species morphism $i:P\rightarrow F$, we obtain a morphism of Rota-Baxter species $\overline{i}:G\longrightarrow F$
such that $\overline{i}\circ j=i$. Then $\overline{i}\circ \overline{j}$ is an endomorphism of Rota-Baxter species of $F$ such that $\overline{i}\circ \overline{j}\circ i=i$. By unicity of such an endomorphism, $\overline{i}\circ \overline{j}=\id_F$.
Similarly, $\overline{j}\circ \overline{i}=\id_G$, so $\overline{i}$ and $\overline{j}$ are two isomorphisms, inverse one from the other.  
\end{remark}

\subsection{Species of \pure angularly decorated forests}
In this subsection, we first recall the concept of angularly decorated forests~\mcite{EG,Gub,ZXG}. Then as an example of species, we give the species of \pure angularly decorated forests.

\begin{defn}
Let $X$ be a set.
\begin{enumerate}
\item  An {\bf angularly $X$-decorated rooted tree} is a planar rooted tree together with a decoration of the angles between adjacent leaves by elements of $X$.

\item For a rooted forest $F=T_1\cdots T_n$, as the concatenation of rooted trees $T_1,\ldots,T_n$, an {\bf angle of the forest $F$} is defined to be either the angle between two leaves of a rooted tree, or the space between two adjacent rooted trees $T_i$ and $T_{i+1}, 1\leq i\leq n-1$.

\item An {\bf angularly $X$-decorated rooted forest} is a planar rooted forest $F$ together with a decoration of the angles of $F$ by elements of $X$. Let $\fax$ denote the set of angularly $X$-decorated rooted forests.
\end{enumerate}
\end{defn}

The one leaf tree $\bullet$ is trivially an angularly $X$-decorated rooted tree with no angle to decorate.
Here are some examples of angularly $X$-decorated trees
\begin{align*}
\begin{tikzpicture}[scale = 0.6] 
\node at (0,0.1) {$\bullet$};
\node at (0.3,0) {,};
\end{tikzpicture}
\begin{tikzpicture}[scale = 0.6] 
\node at (0,0) {$\bullet$};
\node at (0,1) {$\bullet$};
\draw (0,0)-- (0,1);
\node at (0.3,0) {,};
\end{tikzpicture}
\begin{tikzpicture}[scale = 0.6] 
\node at (0,1) {$\bullet$};
\node at (0.5,0) {$\bullet$};
\node at (1,1) {$\bullet$};
\draw (0,1)--(0.5,0)--(1,1);
\node at (0.5,0.7) [scale=0.8]{$x$};
\node at (1.3,0) {,};
\end{tikzpicture}
\begin{tikzpicture}[scale = 0.6] 
\node at (0,1) {$\bullet$};
\node at (1,1) {$\bullet$};
\node at (1,0) {$\bullet$};
\node at (2,1) {$\bullet$};
\draw (0,1)--(1,0)--(1,1);
\draw (1,0)--(2,1);
\node at (0.6,0.7) [scale=0.8]{$x_1$};
\node at (1.4,0.7) [scale=0.8]{$x_2$};
\node at (2.3,0) {,};
\end{tikzpicture}
\begin{tikzpicture}[scale = 0.6] 
\node at (0.5,2) {$\bullet$};
\node at (1,1) {$\bullet$};
\node at (1,0) {$\bullet$};
\node at (1.5,2) {$\bullet$};
\draw (0.5,2)--(1,1)--(1,0);
\draw (1,1)--(1.5,2);
\node at (1,1.7) [scale=0.8]{$x$};
\node at (1.8,0) {,};
\end{tikzpicture}
\end{align*}
and here are some examples of angularly $X$-decorated forests
\begin{align*}
\begin{tikzpicture}[scale = 0.6] 
\node at (0,0) {$\bullet$};
\node at (0.8,0) {$\bullet$};
\node at (0.4,0) [scale=0.8]{$x$};
\node at (1.1,0) {,};
\end{tikzpicture}
\begin{tikzpicture}[scale=0.6] 
\node at (0,0) {$\bullet$};
\node at (0.8,0) {$\bullet$};
\node at (1.6,0) {$\bullet$};
\node at (0.4,0) [scale=0.8] {$x_1$};
\node at (1.2,0) [scale=0.8] {$x_2$};
\node at (1.9,0) {,};
\end{tikzpicture}
\begin{tikzpicture}[scale=0.6] 
\node at (0,1) {$\bullet$};
\node at (0.5,0) {$\bullet$};
\node at (1,1) {$\bullet$};
\draw (0,1)--(0.5,0)--(1,1);
\node at (0.5,0.7) [scale=0.8]{$x_1$};
\node at (1.4,0) [scale=0.8] {$x_2$};
\node at (1.8,0) {$\bullet$};
\node at (2.1,0) {,};
\end{tikzpicture}
\begin{tikzpicture}[scale=0.6] 
\node at (0,1) {$\bullet$};
\node at (0.5,0) {$\bullet$};
\node at (1,1) {$\bullet$};
\draw (0,1)--(0.5,0)--(1,1);
\node at (0.5,0.7) [scale=0.8]{$x_1$};
\node at (1.4,0) [scale=0.8] {$x_2$};
\node at (1.8,0) {$\bullet$};
\node at (1.8,1) {$\bullet$};
\draw (1.8,0)-- (1.8,1);
\node at (2.1,0) {.};
\end{tikzpicture}
\end{align*}
In general, an angularly $X$-decorated rooted forest is of the form
\begin{align*}
T_1 x_1 T_2 x_2 \cdots x_{n-1} T_n,
\end{align*}
where $x_1, \ldots, x_{n-1} \in X$ and $T_1, \ldots, T_n$ are angularly $X$-decorated rooted trees.

\begin{remark}
Note that the number of angles of a rooted forest $F$ is precisely $\ell-1$ for the number $\ell$ of leaves of $F$.
\end{remark}

With this remark in mind, the following description of angularly $X$-decorated forests from~\mcite{Gub} will be convenient to apply. For an angularly $X$-decorated forest $F$, let $u(F)$ be the underlying forest of $F$ and let $d(F)=(x_1,\cdots,x_{\ell-1})\in X^{\ell-1}$ denote the angular decorations of $F$. Then $F$ can be written in the form
\begin{equation}
	F=(u(F),d(F)).
\mlabel{eq:fpres}
\end{equation}
Conversely, every pair $(S,\vec{x})$ consisting of a forest $S$ with $\ell$ leaves and a vector $\vec{x}\in X^{\ell-1}$ uniquely determines an angular $X$-decorated rooted forest $F=F_{(S,\vec{x})}$, by decorating the angles of $S$ with the entries of $\vec{x}$ in the order from the left to the right.

Let $X$ be a finite set. An angularly $X$-decorated rooted forest $F \in \fax$ is called {\bf \pure} if the angles of $F$ are decorated by all the distinct elements of $X$. Since the number of angles of $F$ is $\ell-1$ if the number of leaves $F$ is $\ell$, this means that $|X|=\ell-1$ and each element of $X$ appears exactly once in the angles of $F$. Denote by $\pfa (X)$ the set of all \pure angularly $X$-decorated rooted forests. Thus for a fixed ordering $\vec{x}$ of the elements of $X$, we have
$$\pfa= \{F=(u(F),\tau(\vec{x}))\,|\, \tau\in S_{\ell-1}\}$$
for the permutation group $S_{\ell-1}$ on $\ell-1$ letters.

Define a functor $\adf: \set \rightarrow \vect$ by taking
$$
\adf[X] :=\bfk \pfa (X)
$$
for any finite set $X$, and for any bijection $\sigma: X \rightarrow X'$ of finite sets, defining
$$\adf[\sigma] : \adf[X] \rightarrow \adf[Y],\quad F \mapsto \adf[\sigma](F),
$$
where $F$ is a \pure angularly $X$-decorated rooted forest and $\adf[\sigma](F)$ is the \pure angularly $X'$-decorated rooted forest obtained by replacing the decoration $x \in X$ for any given angle of $F$ by the decoration $\sigma(x)$ for the same angle.

In terms of the description of angular decorated forests in Eq.~\meqref{eq:fpres}, let $F=(u(F),d(F))$ with the underlying rooted forest $u(F)$ of $F$ and the angular decoration vector $\vec{x}=(x_1,\ldots,x_{\ell-1})\in X^{\ell -1}$. Then we simply have
\begin{equation}
	\adf[\sigma](F)=(u(F),\sigma(\vec{x})),
\mlabel{eq:rbspec}
\end{equation}
where $\sigma(\vec{x})=(\sigma(x_1),\dots,\sigma(x_{\ell-1}))$.

By the definition of $\adf$, we have
\begin{align*}
\adf[\id_X]&=\id_{\adf[X]}, &\adf[\sigma \circ \tau]&=\adf[\sigma] \circ \adf[\tau],
\end{align*}
for the identity set map $\id_X: X \rightarrow X$, and bijective maps $\sigma: Y \rightarrow Z, \tau: X \rightarrow Y$. Hence $\adf: \set \rightarrow \vect$ is a species.

\subsection{Free $\rbs$ on the species of \pure angularly decorated forests} \label{sec:ac}
In this subsection, we construct a $\rbs$ on the species of simple angularly decorated forests and show that this $\rbs$ is a free $\rbs$. First we recall the Rota-Baxter algebra structure on angularly decorated forests~\mcite{EG,Gub}.

For an angularly decorated rooted tree $T$, define the {\bf depth $\dep(T)$} of $T$ to be the maximal length of the paths from the root to the leaves of the tree. For an angularly decorated forest $F = T_1 x_1 T_2 \cdots x_n T_n$ with rooted trees $T_1, \ldots, T_n$, define the {\bf depth $\dep(F)$} of $F$ to be the maximum of the depths of the trees $T_1, \ldots, T_k$.

There is a {\bf grafting operator} $B^+:\fax \rightarrow \fax$ which sends an angularly $X$-decorated rooted forest $T_1 x_1 T_2 x_2 \cdots x_{n-1} T_n$ to a new angularly $X$-decorated rooted tree that is obtained by adding a new root, and then adding an edge from the new root to the root of each of the trees $T_1, \ldots, T_n$. For example,
\begin{align*}
B^+\left(
\begin{tikzpicture}[scale = 0.6]
\node at (0,0) {$\bullet$};
\node at (0.8,0) {$\bullet$};
\node at (0.4,0) [scale=0.8]{$x$};
\end{tikzpicture}\right)=
\begin{tikzpicture}[scale = 0.6]
\node at (0,1) {$\bullet$};
\node at (0.5,0) {$\bullet$};
\node at (1,1) {$\bullet$};
\draw (0,1)--(0.5,0)--(1,1);
\node at (0.5,0.7) [scale=0.8]{$x$};
\end{tikzpicture} \,\, \text{ and } \,\,
B^{+}\left(\begin{tikzpicture}[scale=0.6]
\node at (0,0) {$\bullet$};
\node at (0.8,0) {$\bullet$};
\node at (1.6,0) {$\bullet$};
\node at (0.4,0) [scale=0.8] {$x_1$};
\node at (1.2,0) [scale=0.8] {$x_2$};
\end{tikzpicture}\right)=
\begin{tikzpicture}[scale = 0.6]
\node at (0,1) {$\bullet$};
\node at (1,1) {$\bullet$};
\node at (1,0) {$\bullet$};
\node at (2,1) {$\bullet$};
\draw (0,1)--(1,0)--(1,1);
\draw (1,0)--(2,1);
\node at (0.6,0.7) [scale=0.8]{$x_1$};
\node at (1.4,0.7) [scale=0.8]{$x_2$};
\end{tikzpicture}.
\end{align*}
In terms of the representation $F=(u(F),d(F))$ in Eq.~\meqref{eq:fpres}, we have
$B^+(F)=(B^+(u(F)),d(F))$.

Let $\bfk \fax$ be the free \bfk-module generated by $\fax$. To equip $\bfk \fax$ with a Rota-Baxter algebra structure, a multiplication $\shpr$ is defined on $\bfk \fax$ as follows~\mcite{EG,Gub}. For $F,F' \in \bfk \fax$, $F \shpr F'$ is defined by induction on $\dep=\dep(F)+\dep(F')$. If $\dep=0$, then $F,F'$ are of the form
\begin{align*}
F=\bullet x_1 \bullet \cdots x_m \bullet \,\, \text{ and } \,\, F'= \bullet y_1 \bullet \cdots y_n \bullet,
\end{align*}
with the convention that $F=\bullet$ if $m=0$ and $F'=\bullet$ if $n=0$. Then define
\begin{align*}
F \shpr F' := \bullet x_1 \bullet \cdots \bullet x_m \bullet y_1 \bullet \cdots  \bullet y_n \bullet.
\end{align*}
For the induction step of $\dep>0$, let
\begin{equation}
F=T_1 x_1 T_2 \cdots x_m T_{m+1} \,\, \text{ and } \,\, F'=T'_1 y_1 T'_2 \cdots y_n T'_{n+1}.
\mlabel{eq:fdec}
\end{equation}
First consider the case of $F=T_1,F'=T'_1$ being angularly decorated trees. Then define
\begin{equation}
F \shpr F' :=
\left\{
\begin{array}{ll}
\bullet, & \text{if $T_1=T'_1=\bullet$}, \\
T_1 , &  \text{if $T'_1=\bullet$},\\
T'_1, &  \text{if $T_1=\bullet$},\\
B^+(T_1 \shpr \overline{T'}_1)+B^+(\overline{T}_1 \shpr T'_1)+ \lambda B^+(\overline{T}_1 \shpr \overline{T'}_1), & \text{if $T_1=B^+(\overline{T}_1)$ and $T'_1=B^+(\overline{T'}_1)$}.
\end{array}
\right. \mlabel{eq:pro}
\end{equation}
Here in the last case, each term on the right hand side is well defined by the induction hypothesis on the sum of depths.
In the general case of $F$ and $F'$ as in Eq.~\meqref{eq:fdec}, define
\begin{align*}
F \shpr F' :=T_1 x_1 T_2 \cdots x_m (T_{m+1} \shpr T'_1)y_1 T'_2 \cdots y_n T'_{n+1},
\end{align*}
where the product in the middle is given by Eq.~\meqref{eq:pro} and the rest by concatenation. Equivalently, one can define a multiplication $\shpr_u$ on the underlying forests by the same procedure, then for $F=(u(F),d(F))$ and $F'=(u(F'),d(F'))$, define
$$F\shpr F'=\Big(u(F)\shpr_u u(F'), \big(d(F),d(F')\big)\Big).$$ Here $\big(d(F),d(F')\big)$ denotes the vector concatenation of $d(F)$ and $d(F')$.

Finally expand the operations $\shpr$ and $B^+$ to $\bfk \fax$ by (bi-)linearity. Then the following result is obtained.

\begin{theorem}\cite{EG,Gub}
Let $X$ be a set. Let $i_{x}:X \to \bfk\fax,\text{ }x\mapsto \bullet x \bullet$ be the set map. The quadruple $(\bfk \fax,\shpr,B^+,i_x)$ is the free non-commutative unitary Rota-Baxter algebra generated by $X$.
\end{theorem}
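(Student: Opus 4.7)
The plan is to verify in sequence: (i) the quadruple is a Rota-Baxter algebra, (ii) every element of $\bfk\fax$ can be obtained from $\{\bullet\}\cup i_x(X)$ using $\shpr$ and $B^+$, and (iii) the universal property. Since the statement is essentially the main theorem of~\cite{EG,Gub} reproduced for our set-up, the strategy mirrors those references.

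First I would check that $(\bfk\fax,\shpr,\bullet)$ is an associative unital algebra. The unit axiom is immediate from the first three branches of~\meqref{eq:pro} and the concatenation definition in depth~$0$. Associativity is proved by a double induction: outer induction on $\dep(F)+\dep(F')+\dep(F'')$, inner induction on the total number of trees. The case when all three forests are trees reduces by~\meqref{eq:pro} to a routine expansion of the nine terms coming from grafting and the induction hypothesis applied to the trimmed trees $\overline{T}_i$; the concatenation part of the general case contributes no new subtlety because angles between distinct trees interact only by juxtaposition. The Rota-Baxter relation of weight $\lambda$ follows \emph{by construction}: the last branch of~\meqref{eq:pro} is literally the Rota-Baxter identity applied to $B^+(\overline{T}_1)\shpr B^+(\overline{T'}_1)$.

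Next I would establish the universal property. Let $(R,\cdot,P)$ be a Rota-Baxter algebra of weight~$\lambda$ and $g:X\to R$ any set map. Define $\overline{g}:\bfk\fax\to R$ by induction on the pair $(\dep(F),\#\text{trees}(F))$ ordered lexicographically, by the rules
\begin{align*}
\overline{g}(\bullet)&=1_R,\\
\overline{g}(\bullet x_1\bullet\cdots x_m\bullet)&=g(x_1)\cdots g(x_m),\\
\overline{g}(B^+(F))&=P(\overline{g}(F)),\\
\overline{g}(T_1x_1T_2\cdots x_mT_{m+1})&=\overline{g}(T_1)\cdot g(x_1)\cdot\overline{g}(T_2)\cdots g(x_m)\cdot\overline{g}(T_{m+1}).
\end{align*}
These rules determine $\overline{g}$ on every forest because each forest is either the empty tree, of depth zero (handled by the second rule), or obtained by grafting and concatenation from strictly smaller data. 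Any Rota-Baxter algebra morphism extending $g\circ i_x^{-1}$ must satisfy these same rules, which gives uniqueness at once.

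The main obstacle is verifying that $\overline{g}$ is multiplicative, i.e.\ $\overline{g}(F\shpr F')=\overline{g}(F)\cdot\overline{g}(F')$. I would prove this by induction on $\dep(F)+\dep(F')+\#\text{trees}(F)+\#\text{trees}(F')$. The depth-zero case is a direct computation of the concatenation formula. In the inductive step, the non-trivial case is $F=B^+(\overline{T})$, $F'=B^+(\overline{T'})$, where applying $\overline{g}$ to the right-hand side of~\meqref{eq:pro} gives
\begin{align*}
P\bigl(\overline{g}(B^+(\overline{T})\shpr\overline{T'})\bigr)+P\bigl(\overline{g}(\overline{T}\shpr B^+(\overline{T'}))\bigr)+\lambda P\bigl(\overline{g}(\overline{T}\shpr\overline{T'})\bigr),
\end{align*}
which, by the induction hypothesis, equals
\begin{align*}
P\bigl(P(\overline{g}(\overline{T}))\cdot\overline{g}(\overline{T'})\bigr)+P\bigl(\overline{g}(\overline{T})\cdot P(\overline{g}(\overline{T'}))\bigr)+\lambda P\bigl(\overline{g}(\overline{T})\cdot\overline{g}(\overline{T'})\bigr);
\end{align*}
this is exactly $P(\overline{g}(\overline{T}))\cdot P(\overline{g}(\overline{T'}))=\overline{g}(F)\cdot\overline{g}(F')$ by the Rota-Baxter axiom in $R$. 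The concatenation case reduces to the tree case via associativity of $\cdot$ in $R$, already established. Compatibility of $\overline{g}$ with $B^+$ and $i_x$ is built into the definition, so $\overline{g}$ is a Rota-Baxter algebra morphism, completing the proof.
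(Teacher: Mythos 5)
Your proposal is correct and follows essentially the same route as the cited sources; the paper itself gives no proof of this theorem (it is quoted from \cite{EG,Gub}), but its proof of the species analogue (Theorem~\mref{thm:freerb}) constructs $\overline{g}$ by exactly the same nested induction on depth and then on the number of trees, with the same decomposition of a forest into trees and single angles. One small repair: in your multiplicativity step the plain sum $\dep(F)+\dep(F')+\#\text{trees}(F)+\#\text{trees}(F')$ need not decrease when you pass from $B^+(\overline{T})\shpr B^+(\overline{T'})$ to, say, $B^+(\overline{T})\shpr \overline{T'}$ (the depth drops by one but $\overline{T'}$ may contain many trees), so you should use the lexicographic order on the pair $\bigl(\dep(F)+\dep(F'),\,\#\text{trees}(F)+\#\text{trees}(F')\bigr)$, as you already do when defining $\overline{g}$.
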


To construct the twisted algebra on the species $\adf$, we first prove the following lemma.
\begin{lemma}
Let $X,Y$ be two finite sets. Then for  $F \in \adf[X]$ and $G \in \adf[Y]$, we have
\begin{align*}
F \diamond G \in \adf[X \sqcup Y],
\end{align*}
where the product $\diamond$ is the one defined in Eq.~\meqref{eq:pro}.
\label{lem:close}
\end{lemma}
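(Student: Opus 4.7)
The plan is to reduce the lemma to a leaf-count statement. Using the alternative description $F\shpr G=(u(F)\shpr_u u(G),(d(F),d(G)))$ recalled in the discussion of the product, every forest summand on the right carries the concatenated angular decoration vector $(d(F),d(G))$. Because $F\in\adf[X]$ and $G\in\adf[Y]$, the vectors $d(F)$ and $d(G)$ enumerate the elements of $X$ and $Y$ respectively, each exactly once, so $(d(F),d(G))$ enumerates every element of $X\sqcup Y$ exactly once. Hence the decoration condition for membership in $\adf[X\sqcup Y]$ is automatic, and it suffices to prove that every underlying forest summand of $u(F)\shpr_u u(G)$ has precisely $|X|+|Y|$ angles, equivalently $\leaf(u(F))+\leaf(u(G))-1$ leaves.

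I would establish this leaf identity by induction on the total depth $\dep(u(F))+\dep(u(G))$. At depth zero, $u(F)$ has $m+1$ bullets and $u(G)$ has $n+1$ bullets, and by definition $u(F)\shpr_u u(G)$ has $m+n+1=(m+1)+(n+1)-1$ bullets, as required. For the inductive step applied to two trees $T_1=B^+(\overline{T_1})$ and $T'_1=B^+(\overline{T'_1})$, the operator $B^+$ preserves the number of leaves while strictly decreasing depth, so each of the three inner products on the right of Eq.~\meqref{eq:pro} falls under the induction hypothesis. This gives every summand produced there exactly $\leaf(T_1)+\leaf(T'_1)-1$ leaves, a count preserved by the outer $B^+$. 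For general forests decomposed as in Eq.~\meqref{eq:fdec}, only the pair $(T_{m+1},T'_1)$ interacts nontrivially via the tree-tree product, while the remaining trees $T_1,\ldots,T_m$ and $T'_2,\ldots,T'_{n+1}$ are juxtaposed unchanged; leaf counts therefore add across the concatenation, with the single $-1$ absorbed by the middle tree product.

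I do not anticipate any real obstacle beyond careful bookkeeping. The only point worth double-checking is that $\overline{T_1}$ and $\overline{T'_1}$ are genuine angularly decorated forests carrying the same angular decorations as $T_1$ and $T'_1$ respectively (the root added by $B^+$ touches no angle), so that the induction hypothesis really does deliver the three summands in $\adf[X\sqcup Y]$ rather than in a proper subset of decorations.
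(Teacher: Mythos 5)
Your argument is correct. The paper proves the lemma by the same induction on $\dep(F)+\dep(G)$, following the three cases of the recursive definition of $\diamond$ (depth zero, tree--tree via $B^+$, general forests via the middle product), but it works with the decorated objects throughout, checking in each case that the summands of $F\diamond G$ are \pure angularly $(X\sqcup Y)$-decorated. You instead factor the claim into two independent pieces: every summand carries the concatenated decoration vector $\big(d(F),d(G)\big)$, which automatically enumerates $X\sqcup Y$ without repetition, and the underlying forests satisfy the leaf identity $\leaf=\leaf(u(F))+\leaf(u(G))-1$, proved by the same depth induction. This is a cleaner separation of the set-theoretic bookkeeping from the combinatorics of $\shpr_u$, and it isolates the fact, used implicitly in the paper, that $B^+$ preserves leaf counts while raising depth. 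One caution: the description $F\shpr F'=\big(u(F)\shpr_u u(F'),(d(F),d(F'))\big)$ is only well posed once one knows that every summand of $u(F)\shpr_u u(F')$ has exactly $\leaf(u(F))+\leaf(u(G))-1$ leaves, so you should present the concatenation of decorations as an observation verified inside the same induction (it is immediate from Eq.~\meqref{eq:pro}, since no case of the recursion ever permutes, duplicates or drops a decoration) rather than as a prior fact, to avoid any appearance of circularity.
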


\begin{proof}
Let
\begin{equation}
F=T_1 x_1 T_2 \cdots x_m T_{m+1} \in \adf[X], \quad G=T'_1 y_1 T'_2 \cdots y_n T'_{n+1} \in \adf[Y]
\mlabel{eq:fgdec}
\end{equation}
for some $m, n\geq 0$ as in Eq.~\meqref{eq:fdec}. As in the definition of $\diamond$ in Eq.~\meqref{eq:pro}, we prove the result by induction on $\dep(F)+\dep(G)$. If $\dep(F)+ \dep(G)=0$, then
\begin{align*}
F=\bullet x_1 \bullet \cdots x_m \bullet, \, G= \bullet y_1 \bullet \cdots y_n \bullet,
\end{align*}
and so the result holds trivially. For the induction step of $\dep(F)+ \dep(G)>0$, we prove the result by first considering $F=T_1, G=T'_1$ for \pure angularly $X$-decorated tree $T_1$ and \pure angularly $Y$-decorated tree $T_1'$. If $T_1=\bullet$ or $T'_1=\bullet$, then the result holds directly. In the remaining case, we may write $T_1=B^+(\overline{T}_1)$ and $T'_1=B^+(\overline{T'}_1)$ for some $\overline{T}_1\in ADF[X]$, $\overline{T'}_1 \in ADF[Y]$, then
\begin{align*}
&\ F \diamond G= B^+(\overline{T}_1) \diamond B^+(\overline{T'}_1)=B^+(T_1 \shpr \overline{T'}_1)+B^+(\overline{T}_1 \shpr T'_1)+ \lambda B^+(\overline{T}_1 \shpr \overline{T'}_1).
\end{align*}
By the induction hypothesis on the depth,
$T_1 \shpr \overline{T'}_1,\overline{T}_1 \shpr T'_1$ and $\overline{T}_1 \shpr \overline{T'}_1$ are in $\adf[X \sqcup Y]$.
Then  $F \diamond G$ is in  $\adf[X \sqcup Y]$. For general $F$ and $F'$ in Eq.~\meqref{eq:fgdec}, we have $T_{m+1} \in \adf[X']$ and $T'_1 \in \adf[Y']$, for some $X' \subseteq X$ and $Y' \subseteq Y$.  By the definition of $\diamond$,
\begin{align*}
F \diamond G= T_1 x_1T_2 \cdots x_m (T_{m+1} \diamond T'_1) y_1 T'_2 \cdots y_n T'_{n+1}.
\end{align*}
Since $T_{m+1} \diamond T'_1 \in \adf[X' \sqcup Y']$ by the previous case and other decorations are intact, it follows that $F \diamond G$ is in $\adf[X \sqcup Y]$.
\end{proof}

For finite sets $X, Y$, and elements $F \in \adf[X], G \in \adf[Y]$, define
\begin{align}
m_{X,Y}(F,G) :=F \diamond G.
\label{eq:mult}
\end{align}
Then we have the following proposition.

\begin{prop}
The species $\adf$ is a twisted algebra when it is equipped with the multiplication $m_{X,Y}$ defined by Eq.~\meqref{eq:mult} and the unit $\bullet \in \adf[\emptyset]$.
\label{prop:talgebra}
\end{prop}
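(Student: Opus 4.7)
The plan is to verify the three defining properties of a twisted algebra listed after the definition: naturality of $m_{X,Y}$ with respect to bijections, associativity, and the unit axiom. Closure $F \diamond G \in \adf[X \sqcup Y]$ is already handled by Lemma~\ref{lem:close}, so $m_{X,Y}$ is a well-defined linear map $\adf[X] \otimes \adf[Y] \to \adf[X \sqcup Y]$.

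For \emph{naturality}, I would exploit the compact description $F \diamond G = (u(F) \shpr_u u(G), (d(F),d(G)))$ together with the formula $\adf[\sigma](F)=(u(F),\sigma(\vec{x}))$ from Eq.~\eqref{eq:rbspec}. Given bijections $\sigma:X\to X'$ and $\tau:Y\to Y'$, applying $\adf[\sigma]\otimes \adf[\tau]$ to $F\otimes G$ only relabels the entries of $d(F)$ and $d(G)$, while the underlying forests $u(F),u(G)$ are untouched. Since $\shpr_u$ depends only on the underlying forests and not on the decorations, computing $m_{X',Y'}\circ(\adf[\sigma]\otimes\adf[\tau])(F\otimes G)$ yields the forest $u(F)\shpr_u u(G)$ decorated by the concatenation $(\sigma(d(F)),\tau(d(G)))$, which is precisely $(\sigma\sqcup\tau)(d(F),d(G))$, i.e., $\adf[\sigma\sqcup\tau]\circ m_{X,Y}(F\otimes G)$.

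For \emph{associativity} and the \emph{unit} axiom, I would simply invoke the already known Rota-Baxter algebra structure on $\bfk\fax$ (the theorem of Ebrahimi-Fard and Guo cited just above): the product $\shpr$ is associative on $\bfk\fax$ with unit $\bullet$. Restricting to the \pure forests, associativity of the triple product $(F\shpr G)\shpr H=F\shpr(G\shpr H)$ for $F\in\adf[X]$, $G\in\adf[Y]$, $H\in\adf[Z]$ is just the same identity read inside $\bfk\ff(X\sqcup Y\sqcup Z)$, which gives $m_{X\sqcup Y,Z}\circ(m_{X,Y}\otimes\id)=m_{X,Y\sqcup Z}\circ(\id\otimes m_{Y,Z})$. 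The unit condition $m_{\emptyset,X}(\bullet\otimes u)=u=m_{X,\emptyset}(u\otimes \bullet)$ for $u\in\adf[X]$ follows from the second and third cases of Eq.~\eqref{eq:pro} (or directly from the depth-zero case of the definition of $\shpr$), noting that $\bullet\in\adf[\emptyset]$ since an empty set of angles is trivially decorated by the empty set.

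The only step requiring any thought is naturality; the main potential obstacle is that $\shpr$ is defined recursively through $B^+$ and through splitting a forest at its outer angles, so one might worry that a relabeling on the boundary could interact with the recursion. This worry disappears as soon as one observes that the recursion in Eq.~\eqref{eq:pro} manipulates only the tree shapes: the decorations are carried passively along and are only ever concatenated, never permuted or duplicated. Formally this can be made airtight by an induction on $\dep(F)+\dep(G)$ parallel to the proof of Lemma~\ref{lem:close}, at each step comparing the two sides of the naturality square termwise; alternatively, one simply reads off the identity from the factorized form $F\shpr G=(u(F)\shpr_u u(G),(d(F),d(G)))$, which makes the whole computation transparent.
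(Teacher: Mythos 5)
Your proposal is correct and follows essentially the same route as the paper: naturality is read off from the fact that $\diamond$ only concatenates decorations while acting on underlying forests, associativity and unitality are inherited from the known Rota--Baxter algebra structure on $\bfk\fax$, and Lemma~\ref{lem:close} supplies closure. In fact you spell out the naturality argument (via $F\shpr G=(u(F)\shpr_u u(G),(d(F),d(G)))$ and Eq.~\eqref{eq:rbspec}) in more detail than the paper, which simply asserts it ``by the definition of $\adf$.''
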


\begin{proof}
By the definition of $\adf$, we observe  that for any two bijections $\sigma: X \rightarrow X'$ and $\tau: Y \rightarrow Y'$ of finite sets, there is
\begin{align*}
m_{X',Y'} \circ (\adf[\sigma] \ot \adf [\tau])= \adf[\sigma \sqcup \tau] \circ m_{X,Y}.
\end{align*}
Also, for finite sets $X,Y,Z$ and $F \in \adf[X], G \in \adf[Y], H \in \adf[Z]$, we have
\begin{align*}
&\ m_{X \sqcup Y, Z} \circ (m_{X,Y} \ot \id_{\adf[Z]})(F \ot G \ot H)\\
=& (F \diamond G) \diamond H\\
=& F \diamond (G \diamond H)\\
=&\ m_{X, Y \sqcup Z} \circ (\id_{\adf[X]} \ot m_{Y,Z})(F \ot G \ot H).
\end{align*}
Finally, for a finite set $X$ and $F \in \adf[X]$,
\begin{align*}
m_{\emptyset, X} (\bullet \ot F)= \bullet \diamond F=F= F \diamond \bullet =m_{X,\emptyset}(F \ot \bullet).
\end{align*}
Thus the required conditions are verified.
\end{proof}

Now for any finite set $X$, define a map
\begin{equation}
R_X: \adf[X] \rightarrow \adf[X], F \mapsto B^+(F),
\mlabel{eq:rbo}
\end{equation}
where $B^+$ is the grafting operator recalled above. Then for any bijection $\sigma : X \rightarrow X'$ and $F \in \adf[X]$, there is
\begin{align*}
\adf[\sigma] \circ R_X (F)=\adf[\sigma](B^+(F))=B^+(\adf[\sigma](F))=R_{X'} \circ \adf[\sigma](F).
\end{align*}
Hence $R: \adf \rightarrow \adf$ is a morphism of species. Moreover, we have

\begin{prop}
The twisted algebra $(\adf, m, \bullet)$, with the morphism $R$ of species, is a  Rota-Baxter species.
\mlabel{prop:trb}
\end{prop}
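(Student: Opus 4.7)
The plan is to reduce the commutative diagram defining a Rota-Baxter species to the Rota-Baxter identity already built into the recursive definition of $\diamond$, using Lemma~\mref{lem:close} as the bridge between the species setting and the underlying free algebra setting. First I would check that $R_X$ is well-defined as an endomorphism of $\adf[X]$: applying the grafting operator $B^+$ to a \pure angularly $X$-decorated forest yields a \pure angularly $X$-decorated tree, since $B^+$ adds a new root and edges without creating or destroying leaves, and therefore preserves the set of angles together with their decoration by $X$. Naturality of $R$ in $X$ is immediate from Eq.~\meqref{eq:rbspec}, because $B^+$ only alters the underlying tree shape while $\adf[\sigma]$ only relabels decorations.

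Second, I would unpack the Rota-Baxter commutative diagram on elementary tensors. For $F\in\adf[X]$ and $G\in\adf[Y]$, the identity to verify is
\begin{align*}
R_X(F)\diamond R_Y(G) \;=\; R_{X\sqcup Y}\bigl(R_X(F)\diamond G \,+\, F\diamond R_Y(G) \,+\, \lambda\, F\diamond G\bigr).
\end{align*}
Since $R_X(F)=B^+(F)$ and $R_Y(G)=B^+(G)$ are both trees of the form $B^+(\cdot)$, this is precisely the fourth case of the recursive definition of $\diamond$ in Eq.~\meqref{eq:pro}. Alternatively, one can regard it as the restriction to the subspaces $\adf[X],\adf[Y]\subseteq \bfk\mathcal{F}(X\sqcup Y)$ of the Rota-Baxter identity satisfied by $(\bfk\mathcal{F}(X\sqcup Y),\diamond,B^+)$, as guaranteed by the Ebrahimi-Fard--Guo theorem recalled just above. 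Lemma~\mref{lem:close} ensures that each of $F\diamond G$, $B^+(F)\diamond G$, and $F\diamond B^+(G)$ lies in $\adf[X\sqcup Y]$, so applying $R_{X\sqcup Y}=B^+$ to them is legitimate and the output remains in $\adf[X\sqcup Y]$.

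Combined with Proposition~\mref{prop:talgebra}, which provides the underlying twisted algebra structure, this yields the Rota-Baxter species structure. The only potentially delicate point is ensuring that the operations do not leak out of the \pure angularly decorated setting, but this has already been addressed by Lemma~\mref{lem:close}; once this closure is in hand, the Rota-Baxter identity is a direct consequence of the definition of $\diamond$ on trees of depth at least one, and no further computation is required.
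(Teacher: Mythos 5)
Your proposal is correct and follows essentially the same route as the paper: both reduce the Rota-Baxter diagram on elementary tensors $F\ot G$ to the identity $B^+(F)\diamond B^+(G)=B^+\bigl(B^+(F)\diamond G+F\diamond B^+(G)+\lambda F\diamond G\bigr)$, which is exactly the fourth case of Eq.~\meqref{eq:pro}, with Lemma~\mref{lem:close} guaranteeing that all intermediate products stay in $\adf[X\sqcup Y]$. The extra checks you include (that $B^+$ preserves \pure decorations and that $R$ is natural) are the ones the paper carries out in the paragraph immediately preceding the proposition rather than inside the proof itself.
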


\begin{proof}
For any finite sets $X,Y$ and any elements $F \in \adf[X], G \in \adf[Y]$, we have
\begin{align*}
&\ m_{X,Y} \circ (R_X \circ R_Y)(F \ot G)\\
=&m_{X,Y}(B^+(F) \ot B^+(G))\\
=&B^+(F) \diamond B^+(G)\\
=&\ B^+(B^+(F) \diamond G+ F \diamond B^+(G)+ \lambda F \diamond G) \,\, \text{(by the definition of $\diamond$)}\\
=&\ R_{X \sqcup Y} \circ m_{X,Y} \circ (R_X \ot \id_{\adf[Y]}+ \id_{\adf[X]} \ot R_Y + \lambda \id_{\adf[X]} \ot \id_{\adf[Y]})(F \ot G),
\end{align*}
as required.
\end{proof}

We define a species $O$ by
\begin{equation}
O[X] :=
\left\{
\begin{array}{ll}
\bfk \{ \bullet x \bullet \}, & \text{if $X=\{ x\}$ is a singleton set}, \\
0 , &  \text{otherwise}.\\
\end{array}
\right .
\end{equation}
If $\sigma : X=\{ x \} \rightarrow Y=\{ y\}$ is a bijection, then 
\[
O[\sigma]:\left\{\begin{array}{rcl}
O[X] &\longrightarrow& O[Y]\\
\bullet x \bullet &\longmapsto& \bullet y \bullet.
\end{array}\right. \]
There is a morphism $i_O: O \rightarrow \adf$ of species, where $(i_O)_X: O[X] \rightarrow \adf[X]$ is the natural inclusion if $X$ is a singleton set and otherwise $i_O=0$.

Now we show that the $\rbs$ $\adf$ with the morphism $i:=i_O: O \rightarrow \adf$ is a free $\rbs$ on $O$.

\begin{theorem}
The $\rbs$ $\adf$ together with the morphism $i:O\to \adf$ of species is the free $\rbs$ on $O$.
\mlabel{thm:freerb}
\end{theorem}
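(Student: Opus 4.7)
The plan is to construct $\overline{g}$ by recursion on the structure of a pure forest, using the generators provided by $i$ together with the Rota-Baxter and algebra structures of $\adf$. Given a $\rbs$ $(Q, m^Q, \ell_Q, R')$ of the same weight $\lambda$ and a morphism of species $g : O \to Q$, I will define $\overline{g}_X : \adf[X] \to Q[X]$ on a basis element $F \in \pfa(X)$ by induction on the total number of nodes of $F$. When $F = \bullet$ (forcing $X = \emptyset$), set $\overline{g}_\emptyset(\bullet) := \ell_Q$. When $F = B^+(F_0)$ is a nontrivial tree, set $\overline{g}_X(F) := R'_X(\overline{g}_X(F_0))$. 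When $F = T_1 x_1 T_2 \cdots x_{n-1} T_n$ is a multi-tree forest with induced decomposition $X = X_1 \sqcup \{x_1\} \sqcup \cdots \sqcup \{x_{n-1}\} \sqcup X_n$, use the iterated multiplication of Remark~\mref{rem:definem} to set
\begin{align*}
\overline{g}_X(F) := m^Q_{X_1, \{x_1\}, \ldots, \{x_{n-1}\}, X_n}\bigl(\overline{g}_{X_1}(T_1) \ot g_{\{x_1\}}(\bullet x_1 \bullet) \ot \cdots \ot \overline{g}_{X_n}(T_n)\bigr),
\end{align*}
and extend by linearity.

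Naturality of each $\overline{g}_X$ under bijections follows by structural induction from the naturality of $g$, $R'$ and $m^Q$; the unit preservation $\overline{g}(\bullet) = \ell_Q$, the relation $\overline{g} \circ R = R' \circ \overline{g}$, and the identity $\overline{g} \circ i = g$ (using the unit law of $m^Q$) are all immediate from the three defining clauses. The main obstacle is the multiplicativity $\overline{g} \circ m^{\adf} = m^Q \circ (\overline{g} \ot \overline{g})$. By the general forest formula for $\shpr$ together with associativity of $m^Q$, this reduces to showing $\overline{g}(T \shpr T') = m^Q(\overline{g}(T) \ot \overline{g}(T'))$ for two simple trees $T, T'$, the only nontrivial case being $T = B^+(\overline{T})$ and $T' = B^+(\overline{T'})$. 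Applying the defining relation
\begin{align*}
T \shpr T' = B^+(T \shpr \overline{T'}) + B^+(\overline{T} \shpr T') + \lambda B^+(\overline{T} \shpr \overline{T'}),
\end{align*}
then $\overline{g} \circ B^+ = R' \circ \overline{g}$, and the induction hypothesis on $\dep(T) + \dep(T')$ rewrite $\overline{g}(T \shpr T')$ as the right-hand side of the Rota-Baxter axiom in $Q$ applied to $R'(\overline{g}(\overline{T}))$ and $R'(\overline{g}(\overline{T'}))$; the axiom itself then yields $m^Q(\overline{g}(T) \ot \overline{g}(T'))$, completing the induction.

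For uniqueness, any morphism of $\rbs$ $\overline{g}' : \adf \to Q$ satisfying $\overline{g}' \circ i = g$ must agree with $\overline{g}$ on $\bullet$ (preservation of the unit), on every $\bullet x \bullet$ (from $\overline{g}' \circ i = g$), on every nontrivial tree (from $\overline{g}' \circ R = R' \circ \overline{g}'$), and on every multi-tree forest (from multiplicativity, after noting that $F = T_1 \shpr (\bullet x_1 \bullet) \shpr T_2 \shpr \cdots \shpr (\bullet x_{n-1} \bullet) \shpr T_n$ holds in $\adf$ by direct application of the forest formula for $\shpr$, since each factor $\bullet x_i \bullet$ has trivial end trees). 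These four clauses exhaust every basis element of $\adf$ by structural induction, hence $\overline{g}' = \overline{g}$.
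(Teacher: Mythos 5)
Your proposal is correct and follows essentially the same route as the paper's proof: the recursive definition of $\overline{g}$ (unit on $\bullet$, applying $R'$ under $B^+$, iterated products of Remark~\mref{rem:definem} on multi-tree forests) is identical, with only the bookkeeping of the induction differing (total node count versus the paper's nested induction on $|X|$, then depth, then number of trees). You additionally spell out the multiplicativity check via the Rota--Baxter axiom in $Q$ and the uniqueness argument via the factorization $F = T_1 \shpr (\bullet x_1 \bullet) \shpr \cdots \shpr T_n$, details the paper compresses into ``by the construction of $\overline{g}$''.
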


\begin{proof}
For the proof, we prove that for any $\rbs$ $(Q,m,1_Q,R')$ and any morphism $g: O \rightarrow Q$ of species, there is a unique morphism of $\rbs$ $\overline{g}: \adf \rightarrow Q$ such that $g= \overline{g} \circ i$.

For a finite set $X$, define $\overline{g}_X: \adf[X] \rightarrow Q[X]$ by induction on $|X|$. If $|X|=0$, then $X=\emptyset$. For a rooted forest $F \in \adf[\emptyset]$, we define $\overline{g}_{\emptyset}(F)$ by induction on $\dep(F)$. If $\dep(F)=0$, then $F=\bullet$ and define
\begin{align*}
\overline{g}_{\emptyset}(\bullet) := 1_{Q}.
\end{align*}
For the induction step of $\dep(F)>0$, note that $F \in \adf[\emptyset]$ and so $F=B^+(\overline{F})$ for some $\overline{F} \in \adf[\emptyset]$. Then define
\begin{align*}
\overline{g}_{\emptyset} (F) :=R'_{\emptyset} (\overline{g}(\overline{F})).
\end{align*}
Hence $\overline{g}_{\emptyset}(F)$ are defined for all $F \in \adf[\emptyset]$. For the induction step of $|X|>0$, take a rooted forest $F \in \adf[X]$ and define $\overline{g}_X(F)$ by a second induction on $\dep(F)$. If $\dep(F)=0$, then $F$ is of the form $\bullet x_1 \bullet \cdots \bullet x_m \bullet$, where $X=\{ x_1, \ldots, x_m\}$. If $m=1$ and $F=\bullet x_1 \bullet$, then define
\begin{align*}
\overline{g}_X(F):=g_{\{ x_1\}}(\bullet x_1 \bullet).
\end{align*}
If $m>1$, then define
\begin{align*}
\overline{g}_X(F):=m_{\{x_1 \}, \{x_2 \}, \ldots, \{ x_m\}}(g_{\{ x_1\} }(\bullet x_1 \bullet) \ot \cdots \ot  g_{\{ x_m\} }(\bullet x_m \bullet)),
\end{align*}
where $m_{\{x_1 \}, \{x_2 \}, \ldots, \{ x_m\}}$ is the map from $Q[\{ x_1\}] \ot \cdots \ot Q[\{ x_m\}]$ into $Q[X]$ inductively defined as in Remark~\mref{rem:definem}.

For the induction step of $\dep(F)>0$, let $F=T_1 x_1 T_2 \cdots T_m x_m T_{m+1} \in \adf[X]$,  we define $\overline{g}_X(F)$ by induction on $m$. If $m=0$, then $F=B^+(\overline{F})$ for some $\overline{F} \in \adf[X]$. Then define
\begin{align*}
\overline{g}_X (F)=R'_X( \overline{g}_X(\overline{F})),
\end{align*}
where $\overline{g}_X(\overline{F})$ is defined by induction hypothesis.

For the induction step of $m>0$,
assume $T_1 \in \adf[X_1], \ldots, T_m \in \adf[X_m], T_{m+1} \in \adf[X_{m+1}]$ and $X_1 \sqcup \cdots \sqcup X_{m+1} \sqcup \{ x_1, \ldots, x_{m+1}\}=X$. Then define
\begin{align*}
\overline{g}_X(F)=m_{X_1, \{ x_1\}, \ldots, X_m, \{x_m\}, X_{m+1}} ( \overline{g}_{X_1}(T_1) \ot \overline{g}_{\{ x_1\} }(\bullet x_1 \bullet) \ot \cdots \ot \overline{g}_{\{  x_m\}}(\bullet x_m \bullet) \ot \overline{g}_{X_{m+1}}(T_{m+1})),
\end{align*}
where $m_{X_1, \{ x_1\}, \ldots, X_m, \{x_m\}, X_{m+1}}$ is the map from $Q[X_1] \ot Q[\{ x_1\}] \ot \cdots \ot Q[\{ x_m\}] \ot Q[X_{m+1}]$ into $Q[X]$ inductively defined as in Remark~\mref{rem:definem}.

By the construction of $\overline{g}$, it is the unique morphism of twisted Rota-Baxter algebras such that $g=\overline{g} \circ i$.
\end{proof}

\section{Twisted bialgebras on the species of \pure angularly decorated forests}\mlabel{sec:ad}
In this section, applying the universal property of the free Rota-Baxter species, a bialgebra structure is given to the free $\rbs$ on \pure angularly decorated forests.

\subsection{Rota-Baxter structures on tensor powers}

We first construct an augmentation on $\adf$ which will be the counit in the bialgebra construction.

The species $1_{\bfk}$ is a Rota-Baxter species, with the product of $\bfk$ and the null Rota-Baxter operator. Thus the universal property of $\adf$ gives a unique morphism of Rota-Baxter species $$\epsilon:\adf \longrightarrow 1_\bfk$$ 
such that
for any singleton set $\{x\}$, $\epsilon_{\{x\}}(\bullet x \bullet)=0$. 

\begin{lemma}
The species morphism $\epsilon$ thus defined is an augmentation: for any finite set $X$ and $F\in \adf[X]$, we have 
\begin{equation}
\epsilon_X (F) :=
\left\{
\begin{array}{ll}
1_{\bfk} , & \text{if $X=\emptyset$ and $F=\bullet$}, \\
0 , &  \text{otherwise}.\\
\end{array}
\right . 
\end{equation}
\mlabel{lem:eam}
\end{lemma}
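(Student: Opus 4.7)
The plan is to define a candidate morphism $\epsilon'$ by the stated formula, verify that it is a morphism of Rota-Baxter species with $\epsilon'\circ i$ equal to the zero morphism $g:O\to 1_\bfk$ used to produce $\epsilon$, and then invoke the uniqueness clause in Theorem~\mref{thm:freerb} to conclude $\epsilon=\epsilon'$.

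First, I set $\epsilon'_X(F):=1_\bfk$ when $X=\emptyset$ and $F=\bullet$, and $\epsilon'_X(F):=0$ otherwise, extended linearly. Naturality with respect to bijections $\sigma:X\to Y$ is immediate, since $\adf[\sigma]$ fixes the distinguished element $\bullet\in\adf[\emptyset]$ and both sides of the naturality square vanish in every other case.

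Next, I would check that $\epsilon'$ is a morphism of twisted algebras. The unit axiom $\epsilon'_\emptyset(\bullet)=1_\bfk$ holds by construction. For multiplicativity I need
\[
\epsilon'_{X\sqcup Y}(F\diamond G)=\epsilon'_X(F)\cdot \epsilon'_Y(G).
\]
The right-hand side is nonzero only when $X=Y=\emptyset$ and $F=G=\bullet$; in that case the top branch of Eq.~\meqref{eq:pro} gives $\bullet\diamond\bullet=\bullet$, so both sides equal $1_\bfk$. Otherwise I must show the left-hand side vanishes. This is automatic if $X\sqcup Y\neq\emptyset$, because $1_\bfk[X\sqcup Y]=0$. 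In the remaining subcase $X=Y=\emptyset$ with $(F,G)\neq(\bullet,\bullet)$, the relation $|X|=\ell-1$ forces every element of $\adf[\emptyset]$ to be a ladder tree of the form $B^{+k}(\bullet)$, and applying Eq.~\meqref{eq:pro} recursively one sees that $F\diamond G$ is a linear combination of trees each of the form $B^+(\cdots)$, hence of positive depth and distinct from $\bullet$.

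Then $\epsilon'$ is compatible with the Rota-Baxter operators: since the operator on $1_\bfk$ is the zero morphism, I only need to check $\epsilon'_X(B^+(F))=0$ for every finite set $X$ and every $F\in\adf[X]$, which holds because $B^+(F)$ always has depth at least one. Finally, for a singleton $\{x\}$ we have $(\epsilon'\circ i)_{\{x\}}(\bullet x\bullet)=0$, matching the zero morphism $g:O\to 1_\bfk$ used in the construction of $\epsilon$, so the uniqueness part of Theorem~\mref{thm:freerb} yields $\epsilon=\epsilon'$. The main obstacle is the multiplicativity check in the subcase $X=Y=\emptyset$ with $(F,G)\neq(\bullet,\bullet)$: it requires the combinatorial observation that $\adf[\emptyset]$ is spanned by ladder trees and that the recursive formula Eq.~\meqref{eq:pro} preserves positive depth in this regime. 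Once this structural point is in hand, every other verification is routine.
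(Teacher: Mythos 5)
Your proof is correct, but it runs in the opposite direction from the paper's. You build the candidate $\epsilon'$ from the explicit formula, verify by hand that it is a morphism of Rota-Baxter species whose restriction to $O$ is zero, and then invoke the uniqueness clause of Theorem~\mref{thm:freerb} to identify it with $\epsilon$. The paper instead takes the $\epsilon$ already produced by the existence clause and computes it directly: since $\epsilon$ is an algebra morphism to $\spei$ with $\epsilon\circ R=0$, its kernel is an ideal of $\adf$ containing every $\bullet x\bullet$ and the image of $R$, and an induction on depth shows this ideal contains every \pure angularly decorated forest other than $\bullet$. The paper's route avoids your multiplicativity check entirely --- it never needs the structural fact that $\adf[\emptyset]$ consists of ladder trees, only that every forest other than $\bullet$ is reachable from the generators via products and $R$. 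Your route makes the closed formula primary and is somewhat more explicit, at the cost of the case analysis when $X=Y=\emptyset$; there, incidentally, a single application of Eq.~\meqref{eq:pro} already suffices, since the outermost layer of $F\diamond G$ is visibly a linear combination of terms of the form $B^+(\cdots)$, so no recursion is needed. Both arguments are sound and of comparable length.
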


\begin{proof}
As $\epsilon$ is a morphism of Rota-Baxter species, $\epsilon_\emptyset(\bullet)=1_\bfk$. Its kernel is an ideal of the twisted algebra $\adf$, containing the elements $\bullet x \bullet$, and the image of $R$ as $\epsilon\circ R=0$. 
An easy induction on the depth shows that this kernel contains all the \pure angularly decorated  forests  except $\bullet$. 
\end{proof}

For later applications, we give some properties of the morphism $\epsilon$ as follows.

\begin{lemma}
For any finite sets $X, Y$ and any $F \in \adf[X], G \in \adf [Y]$, we have
\begin{enumerate}
\item \mlabel{it:a} $\epsilon_{X \sqcup Y}(R_X(F) \diamond G)=\epsilon_{X \sqcup Y}(F \diamond R_Y(G))=0$;
\item \mlabel{it:b} $R_{X \sqcup Y} (\epsilon_X(F) \bullet \diamond G)=\epsilon_X(F) \bullet \diamond R_Y(G)$, $R_{X \sqcup Y} (F \diamond \epsilon_Y(G) \bullet)=R_X(F) \diamond \epsilon_Y(G) \bullet$;
\item \mlabel{it:c} $\epsilon_{X \sqcup Y}( \epsilon_X(F) \bullet \diamond G)= \epsilon_{X \sqcup Y} (F \diamond \epsilon_Y(G)\bullet)=\epsilon_X(F)\epsilon_Y(G)$.
\end{enumerate}
\mlabel{prop:eam}
\end{lemma}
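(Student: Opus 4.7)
The approach is to exploit the fact that, by its construction just before Lemma~\mref{lem:eam}, $\epsilon : \adf \to \spei$ is a morphism of Rota-Baxter species, and that the Rota-Baxter operator on $\spei$ is the zero map. Two consequences will be used repeatedly: first, $\epsilon_X \circ R_X = 0$ for every $X$; second, the algebra-morphism identity $\epsilon_{X \sqcup Y} \circ m_{X, Y} = m^{\spei}_{X, Y} \circ (\epsilon_X \ot \epsilon_Y)$. Combined with the explicit description from Lemma~\mref{lem:eam} that $\epsilon_X(F) = 0$ unless $X = \emptyset$ and $F = \bullet$, each of the three statements collapses to a routine check.

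For part (a), apply the algebra-morphism identity to $R_X(F) \ot G$ to get
\[
\epsilon_{X \sqcup Y}(R_X(F) \diamond G) = m^{\spei}_{X,Y}\bigl(\epsilon_X(R_X(F)) \ot \epsilon_Y(G)\bigr) = 0,
\]
because the first tensor factor vanishes by $\epsilon \circ R = 0$. The second equality in (a) is symmetric.

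For parts (b) and (c), note that $\epsilon_X(F)$ is a scalar in $\bfk$, and likewise $\epsilon_Y(G)$. In (b), when $X \neq \emptyset$ both sides are zero. When $X = \emptyset$, one has $R_{X \sqcup Y} = R_Y$ and $\bullet \diamond G = G$, so the claim reduces to the $\bfk$-linearity of $R_Y$. The same argument, with the roles of $X$ and $Y$ swapped, handles the second equation in (b). For (c), the algebra-morphism property of $\epsilon$ yields
\[
\epsilon_{X \sqcup Y}\bigl(\epsilon_X(F)\bullet \diamond G\bigr) = \epsilon_X(F) \cdot \epsilon_\emptyset(\bullet) \cdot \epsilon_Y(G) = \epsilon_X(F)\epsilon_Y(G),
\]
since $\epsilon_\emptyset(\bullet) = 1_\bfk$, and the other equality is symmetric.

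I do not expect any technical obstacle: the entire proof is a direct application of the morphism properties of $\epsilon$ together with the augmentation formula in Lemma~\mref{lem:eam}. The only mild subtlety is notational, keeping track of the ambient space of each expression; in particular, in (b) the argument of $R_{X \sqcup Y}$ actually lives in $\adf[\emptyset \sqcup Y] = \adf[Y]$, so the subscript on $R$ agrees literally only when $X = \emptyset$, the remaining cases being trivially vacuous because $\epsilon_X(F) = 0$.
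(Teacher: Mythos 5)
Your proof is correct, and for parts (a) and (c) it takes a genuinely different (and arguably cleaner) route than the paper's. The paper proves (a) by a combinatorial inspection of the product: the expansion of $R_X(F)\diamond G$ (resp.\ $F\diamond R_Y(G)$) never produces the term $\bullet$, hence lies in the kernel of $\epsilon$; and it proves (c) by the same kind of case analysis it uses for (b), splitting on whether $X=\emptyset$ and $F=\bullet$. You instead derive (a) and (c) purely from the structural facts that $\epsilon$ is, by its construction via the universal property, a morphism of twisted algebras (so $\epsilon_{X\sqcup Y}(F\diamond G)=\epsilon_X(F)\epsilon_Y(G)$) and intertwines $R$ with the zero operator on $\spei$ (so $\epsilon\circ R=0$). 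This avoids re-examining the combinatorics of $\diamond$, is not circular (both facts are available before Lemma~\mref{lem:eam}), and makes it transparent that (a) and (c) hold for any Rota-Baxter species equipped with such an augmentation, not just $\adf$; the paper's version, by contrast, keeps everything at the level of explicit forests. For (b) your argument coincides with the paper's case analysis (your observation that when $X=\emptyset$ the identity reduces to $\bfk$-linearity of $R_Y$ even subsumes the paper's separate treatment of $F=\bullet$ versus $F\neq\bullet$), and your remark that the argument of $R_{X\sqcup Y}$ really lives in $\adf[Y]$, the mismatch being harmless because the expression vanishes unless $X=\emptyset$, correctly identifies the notational abuse present in the statement.
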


\begin{proof}
\mref{it:a} By the definition of $\diamond$, we see that the expansions of $R_X(F) \diamond G$ and $F \diamond R_Y(G)$ do not include the term $\bullet$. Hence
$\epsilon_{X \sqcup Y}(R_X(F) \diamond G)=\epsilon_{X \sqcup Y}(F \diamond R_Y(G))=0$.

\mref{it:b} If $X =\emptyset$ and $F=\bullet$, then
$$R_{X \sqcup Y}(\epsilon_X(F) \bullet \diamond G)= R_Y(\bullet \diamond G)=R_Y(G)=\bullet \diamond R_Y(G)=\epsilon_X(F)\bullet \diamond R_Y(G).$$
If not, then
$$R_{X \sqcup Y} (\epsilon_X(F) \bullet \diamond G)=R_{X \sqcup Y} (0 \diamond G)=0=\epsilon_X(F) \bullet \diamond R_Y(G).$$
Similarly, we have
$$R_{X \sqcup Y} (F \diamond \epsilon_Y(G) \bullet)=R_X(F) \diamond \epsilon_Y(G) \bullet.$$

\mref{it:c} If $X =\emptyset$ and $F=\bullet$, then
$$\epsilon_{X \sqcup Y}( \epsilon_X(F) \bullet \diamond G)= \epsilon_Y(\bullet \diamond G) =\epsilon_Y(G)=\epsilon_X(F) \epsilon_Y(G).$$
If not, then
$$\epsilon_{X \sqcup Y} (\epsilon_X(F) \bullet \diamond G)= \epsilon_Y(0 \diamond G)=0=\epsilon_X(F)\epsilon_Y(G).$$
Similarly, we have
$$\hspace{1cm} \epsilon_{X \sqcup Y} (F \diamond \epsilon_Y(G)\bullet)=\epsilon_X(F)\epsilon_Y(G). \hspace{1cm} \qedhere $$
\end{proof}

Note that~\cite{Foi} if $(\spep, m_{\spep})$ and $(\speq, m_{\speq})$ are two twisted algebras, then $\spep \ot \speq$ is also a twisted algebra with the product
\begin{align*}
m_{\spep \ot \speq} :=(m_{\spep} \ot m_{\speq}) \circ (\id_{\spep} \ot c_{\spep, \speq} \ot \id_{\speq}),
\end{align*}
where $c_{\spep, \speq}: \spep \ot \speq \rightarrow \speq \ot \spep$ is the isomorphism of species
\begin{equation*}
(c_{\spep, \speq})_X:
\left\{
\begin{array}{ll}
\spep \ot \speq[X] & \rightarrow \speq \ot \spep [X], \\
x \ot y  & \mapsto y \ot x  .\\
\end{array}
\right .
\end{equation*}

Since $\adf$ is a twisted algebra, $\adf \ot \adf$ is also a twisted algebra. We further have:
\begin{prop}
The twisted algebra $\adf \ot \adf$ is a $\rbs$, where the Rota-Baxter operator $R^{(2)}: \adf \ot \adf \rightarrow \adf \ot \adf$ is defined by
\begin{equation*}
R^{(2)}_X:
\left\{
\begin{array}{rl}
\adf \ot \adf[X] & \rightarrow \adf \ot \adf[X], \\
 \adf[X_1] \ot \adf[X \setminus X_1]\ni F \ot G   & \mapsto R_{X_1}(F)  \ot G+ \epsilon_{X_1}(F) \bullet \ot R_{X \setminus X_1} (G).\\
\end{array}
\right .
\end{equation*}
\label{prop:ttensor}
\end{prop}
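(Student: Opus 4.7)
The plan is to verify the Rota-Baxter identity for $R^{(2)}$ by direct expansion. The twisted algebra structure on $\adf\ot \adf$ is the one recalled immediately before the statement (taken from \cite{Foi}), and $R^{(2)}$ is manifestly natural in $X$: it is built on each finite set from $R$ and $\epsilon$, both of which are morphisms of species, and the Cauchy decomposition of $(\adf\ot\adf)[X]$ is natural as well. Hence the only substantive point is the Rota-Baxter identity of weight $\lambda$ for $(m^{(2)},R^{(2)})$, which must be checked on arbitrary $F_1\ot G_1\in \adf[X_1]\ot \adf[X_2]$ and $F_2\ot G_2\in \adf[Y_1]\ot \adf[Y_2]$.

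Expanding the left-hand side of this identity by bilinearity produces four monomials, one for each pairing of the two summands of $R^{(2)}(F_i\ot G_i)$, namely
\begin{align*}
&(R(F_1)\diamond R(F_2))\ot(G_1\diamond G_2),\qquad (R(F_1)\diamond \epsilon(F_2)\bullet)\ot(G_1\diamond R(G_2)),\\
&(\epsilon(F_1)\bullet\diamond R(F_2))\ot(R(G_1)\diamond G_2),\qquad (\epsilon(F_1)\epsilon(F_2)\bullet)\ot(R(G_1)\diamond R(G_2)).
\end{align*}
For the right-hand side I would first expand $m^{(2)}\circ(R^{(2)}\ot\id+\id\ot R^{(2)}+\lambda\,\id\ot\id)$ into five monomials in $\adf\ot\adf$, then apply $R^{(2)}=R\ot\id+(\epsilon\,\bullet)\ot R$ to each, yielding ten summands. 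These are simplified by appealing to Lemma~\mref{prop:eam}: clause~\mref{it:a} kills any $\epsilon$ evaluated on a product that contains an $R$; clause~\mref{it:b} pulls $R$ through products of the forms $\epsilon(F)\bullet\diamond{-}$ and ${-}\diamond\epsilon(G)\bullet$; and clause~\mref{it:c}, together with $\epsilon(F_1\diamond F_2)=\epsilon(F_1)\epsilon(F_2)$ (which holds because $\epsilon$ is a morphism of Rota-Baxter species by construction), collapses the remaining $\epsilon$ evaluations.

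Grouping the surviving terms by the shape of the second tensor factor, the contributions with second factor $G_1\diamond G_2$ reassemble into $R(R(F_1)\diamond F_2+F_1\diamond R(F_2)+\lambda F_1\diamond F_2)\ot(G_1\diamond G_2)$, which by the Rota-Baxter identity on $\adf$ (Proposition~\mref{prop:trb}) equals the first LHS monomial. The terms with second factor $R(G_1)\diamond G_2$ and $G_1\diamond R(G_2)$ produce the next two LHS monomials directly after one application of Lemma~\mref{prop:eam}\mref{it:b}. Finally, the contributions whose first factor is a scalar multiple of $\bullet$ collect into $\epsilon(F_1)\epsilon(F_2)\bullet\ot R(R(G_1)\diamond G_2+G_1\diamond R(G_2)+\lambda G_1\diamond G_2)$, which by a second use of Proposition~\mref{prop:trb} gives the fourth LHS monomial.

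The main obstacle is bookkeeping: the right-hand side generates ten summands that must each be routed through the correct clause of Lemma~\mref{prop:eam} and then sorted into four packets matching the four LHS monomials; the Rota-Baxter identity on $\adf$ then closes the argument independently in each of the two tensor slots. No additional ingredient is required beyond Lemma~\mref{prop:eam} and the Rota-Baxter identity already established for $\adf$.
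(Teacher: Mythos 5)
Your proposal is correct and follows essentially the same route as the paper: naturality of $R^{(2)}$ is immediate from that of $R$ and $\epsilon$, and the Rota--Baxter identity is verified by expanding the right-hand side into ten summands, simplifying each via Lemma~\mref{prop:eam} (together with multiplicativity of $\epsilon$), and regrouping into the four monomials of $(m_{\adf\ot\adf})\circ(R^{(2)}\ot R^{(2)})$ using the Rota--Baxter identity on $\adf$ in each tensor slot. The only cosmetic difference is that the paper also records explicitly that $R^{(2)}_X$ lands in the correct summands of $\adf\ot\adf[X]$, which your write-up leaves implicit.
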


\begin{proof}
If $X_1=\emptyset$ and for $F \ot G \in \adf[X_1] \ot \adf[X \setminus X_1]$ with $F=\bullet$,
\begin{align*}
R^{(2)}_X(F \ot G)=R_{\emptyset}(F) \ot G+\epsilon_{\emptyset}(F) \bullet \ot R_X(G) =B^+(F) \ot G+ \bullet \ot B^+(G) \in \adf[\emptyset] \ot \adf[X].
\end{align*}
Otherwise, $\epsilon_{X_1}(F)=0$ and so
\begin{align*}
R^{(2)}_X(F \ot G)= R_{X_1}(F)  \ot G+ \epsilon_{X_1}(F) \bullet \ot R_{X \setminus X_1} (G)=B^+(F)  \ot G \in \adf [X_1]\ot \adf[X \setminus X_1].
\end{align*}
For any bijection $\sigma: X \rightarrow X'$ and $F \ot G \in \adf \ot \adf[X]$, by the definition of $R^{(2)}$, we find that
\begin{align*}
(\adf \ot \adf[\sigma]) \circ R^{(2)}_X(F \ot G)=R^{(2)}_{X'} \circ ( \adf \ot \adf[\sigma])(F \ot G).
\end{align*}
Hence $R^{(2)}: \adf \ot \adf \to \adf \ot \adf$ is a morphism of species. Now for $F_1 \ot G_1 \in  \adf[X_1] \ot \adf [X \setminus X_1] \subset \adf \ot \adf[X]$ and $F_2 \ot G_2 \in \adf[Y_1] \ot \adf[Y \setminus Y_1] \subset \adf \ot \adf[Y]$, there is
\begin{align*}
&\ R^{(2)}_{X \sqcup Y} \circ (m_{\adf \ot \adf})_{X,Y} \circ (R^{(2)}_X \ot \id+ \id \ot R^{(2)}_Y + \lambda \id \ot \id )((F_1 \ot G_1) \ot (F_2 \ot G_2))\\
=&\ R^{(2)}_{X \sqcup Y} \circ (m_{\adf \ot \adf})_{X,Y} (R^{(2)}_X(F_1 \ot G_1) \ot (F_2 \ot G_2) + (F_1 \ot G_1) \ot R^{(2)}_Y(F_2 \ot G_2) \\
&\ + \lambda (F_1 \ot G_1) \ot (F_2 \ot G_2))\\
=&\ R^{(2)}_{X \sqcup Y} \circ (m_{\adf \ot \adf})_{X,Y} \big((R_{X_1}(F_1) \ot G_1 + \epsilon_{X_1}(F_1) \bullet \ot R_{X \setminus X_1}(G_1)) \ot (F_2 \ot G_2) \\
&\ p+(F_1 \ot G_1) \ot (R_{Y_1}(F_2) \ot G_2 + \epsilon_{Y_1}(F_2) \bullet \ot R_{Y \setminus Y_1}(G_2)) + \lambda (F_1 \ot G_1) \ot (F_2 \ot G_2) \big) \\
=&\ R^{(2)}_{X \sqcup Y} \Big( (R_{X_1}(F_1) \diamond F_2) \ot (G_1 \diamond G_2) +(\epsilon_{X_1}(F_1) \bullet \diamond F_2) \ot (R_{X \setminus X_1}(G_1) \diamond G_2) \\
&\ +(F_1 \diamond R_{Y_1}(F_2)) \ot (G_1 \diamond G_2)+((F_1 \diamond \epsilon_{Y_1}(F_2) \bullet) \ot (G_1 \diamond R_{Y \setminus Y_1}(G_2))+ \lambda (F_1 \diamond F_2) \ot (G_1 \diamond G_2)\Big)\\
=&\ R_{X_1 \sqcup Y_1}(R_{X_1}(F_1) \diamond F_2) \ot (G_1 \diamond G_2)+ \epsilon_{X_1 \sqcup Y_1}(R_{X_1}(F_1) \diamond F_2) \bullet \ot R_{(X \sqcup Y) \setminus (X_1 \sqcup Y_1)}(G_1 \diamond G_2)\\
&\ +R_{X_1 \sqcup Y_1}(\epsilon_{X_1}(F_1) \bullet \diamond F_2) \ot (R_{X \setminus X_1}(G_1) \diamond G_2)\\
&\ + \epsilon_{X_1 \sqcup Y_1} ((\epsilon_{X_1}(F_1) \bullet) \diamond F_2)\bullet \ot R_{ (X \sqcup Y ) \setminus (X_1 \sqcup Y_1) }(R_{X \setminus X_1}(G_1) \diamond G_2)\\
&\ +R_{X_1 \sqcup Y_1}(F_1 \diamond R_{Y_1}(F_2)) \ot (G_1 \diamond G_2)+ \epsilon_{X_1 \sqcup Y_1}(F_1 \diamond R_{Y_1}(F_2))\bullet \ot R_{(X \sqcup Y) \setminus (X_1 \sqcup Y_1) }(G_1 \diamond G_2)\\
&\ +R_{X_1 \sqcup Y_1}(F_1 \diamond (\epsilon_{Y_1}(F_2) \bullet)) \ot (G_1 \diamond R_{Y \setminus Y_1}(G_2))\\
&\ +\epsilon_{X_1 \sqcup Y_1}(F_1 \diamond (\epsilon_{Y_1}(F_2) \bullet)) \bullet \ot R_{ (X \sqcup Y) \setminus (X_1 \sqcup Y_1) }(G_1 \diamond R_{Y \setminus Y_1}(G_2))\\
&\ +\lambda R_{X_1 \sqcup Y_1}(F_1 \diamond F_2) \ot (G_1 \diamond G_2)+ \lambda \epsilon_{X_1 \sqcup Y_1}(F_1 \diamond F_2)\bullet \ot R_{ (X \sqcup Y) \setminus (X_1 \sqcup Y_1) }(G_1 \diamond G_2)\\
=&\ \Big(R_{X_1 \sqcup Y_1}(R_{X_1}(F_1) \diamond F_2)+R_{X_1 \sqcup Y_1}(F_1 \diamond R_{Y_1}(F_2)) +\lambda R_{X_1 \sqcup Y_1}(F_1 \diamond F_2)  \Big)\ot (G_1 \diamond G_2)\\
&\ +(\epsilon_{X_1}(F_1) \bullet \diamond R_{Y_1}(F_2)) \ot (R_{X \setminus X_1}(G_1) \diamond G_2)+(R_{X_1}(F_1) \diamond \epsilon_{Y_1}(F_2) \bullet) \ot (G_1 \diamond R_{Y \setminus Y_1}(G_2))\\
&\ +(\epsilon_{X_1}(F_1) \epsilon_{Y_1}(F_2))\bullet \ot \Big(R_{\{X \sqcup Y \} \setminus \{X_1 \sqcup Y_1 \}}(R_{X \setminus X_1}(G_1) \diamond G_2)+R_{\{X \sqcup Y \} \setminus \{X_1 \sqcup Y_1 \}}(G_1 \diamond R_{Y \setminus Y_1}(G_2))\\
&\ +\lambda R_{\{X \sqcup Y \} \setminus \{X_1 \sqcup Y_1 \}}(G_1 \diamond G_2)  \Big) \quad \text{(by Lemmas~\mref{lem:eam} and ~\mref{prop:eam})}\\
=&\ (R_{X_1}(F_1) \diamond R_{Y_1}(F_2)) \ot (G_1 \diamond G_2)+(\epsilon_{X_1}(F_1) \bullet \diamond R_{Y_1}(F_2)) \ot (R_{X \setminus X_1}(G_1) \diamond G_2)\\
&\ +(R_{X_1}(F_1) \diamond \epsilon_{Y_1}(F_2) \bullet) \ot (G_1 \diamond R_{Y \setminus Y_1}(G_2))+(\epsilon_{X_1}(F_1) \epsilon_{Y_1}(F_2))\bullet \ot (R_{X \setminus X_1}(G_1) \diamond R_{Y \setminus Y_1}(G_2))\\
=&\ (m_{\adf \ot \adf})_{X,Y}\Big( \left(R_{X_1}(F_1) \ot G_1+ \epsilon_{X_1}(F_1) \bullet \ot R_{X \setminus X_1}(G_1) \right) \ot \left(R_{Y_1}(F_2) \ot G_2+ \epsilon_{Y_1}(F_2) \bullet \ot R_{Y \setminus Y_2}(G_2) \right) \Big)\\
=&\ (m_{\adf \ot \adf})_{X,Y}\circ(R^{(2)}_{X} \ot R^{(2)}_{Y})((F_1 \ot G_1) \ot (F_2 \ot G_2)),
\end{align*}
and hence $R^{(2)}$ satisfies the Rota-Baxter identity.
\end{proof}

Based on $\adf$ being a $\rbs$ with $R$, and $\adf \ot \adf$ being a $\rbs$ with $R^{(2)}$, we define
$R^{(3)}: \adf \ot \adf \ot \adf \rightarrow \adf \ot \adf \ot \adf$ by
\begin{align*}
&\ R^{(3)}_{X_1,X_2,X \setminus(X_1 \sqcup X_2)} (F \ot G \ot H):=R_{X_1}(F)  \ot G \ot H+ \epsilon_{X_1}(F) \bullet \ot R^{(2)}_{X \setminus X_1} (G \ot H)\\
=&\ R_{X_1}(F)  \ot G \ot H+\epsilon_{X_1}(F) \bullet \ot R_{X_2}(G) \ot H+ \epsilon_{X_1}(F) \bullet \ot \epsilon_{X_2}(G) \bullet \ot R_{X \setminus(X_1 \sqcup X_2)}(H),
\end{align*}
for $F \ot G \ot H \in \adf[X_1] \ot \adf[X_2] \ot \adf[X \setminus(X_1 \sqcup X_2)]$. Then by a similar argument for the proof of Proposition~\mref{prop:ttensor}, we obtain

\begin{prop}
The species $\adf \ot \adf \ot \adf$ equipped with the operator $R^{(3)}$ is a Rota-Baxter species.
\mlabel{prop:trb2}
\end{prop}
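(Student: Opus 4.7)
The proof strategy is to mimic Proposition~\mref{prop:ttensor} almost verbatim, exploiting the observation that $R^{(3)}$ is structurally identical to $R^{(2)}$ once one bundles the last two tensor slots together. Indeed, the definition can be reorganized as
\begin{align*}
R^{(3)}(F \ot G \ot H) = R(F) \ot (G \ot H) + \epsilon(F)\, \bullet \ot R^{(2)}(G \ot H),
\end{align*}
which is precisely the $R^{(2)}$-formula applied to the pair $(\adf,\, \adf \ot \adf)$, where the second factor is a Rota-Baxter species by Proposition~\mref{prop:ttensor}. The proof of that proposition used, on the first tensor slot, only (a) that $\adf$ is a Rota-Baxter species and (b) the three properties of the augmentation $\epsilon$ collected in Lemma~\mref{prop:eam}; on the second slot it used only that it is a Rota-Baxter species. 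Since both hypotheses remain valid when the second slot is replaced by $(\adf \ot \adf,\, R^{(2)})$, the very same calculation applies.

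Concretely, I would first check that $R^{(3)}$ is a morphism of species: for a bijection $\sigma : X \to X'$, this reduces to the already-established naturality of $R$, of $\epsilon$, and of $R^{(2)}$. Next, given $u_i = F_i \ot G_i \ot H_i \in \adf \ot \adf \ot \adf[X_i]$ for $i=1,2$, I would expand
\begin{align*}
R^{(3)}_{X_1 \sqcup X_2} \circ m \circ \bigl(R^{(3)} \ot \id + \id \ot R^{(3)} + \lambda \id \ot \id\bigr)(u_1 \ot u_2)
\end{align*}
and compare with $m \circ (R^{(3)} \ot R^{(3)})(u_1 \ot u_2)$, grouping the summands by which of the two alternatives $R(F_i)$ or $\epsilon(F_i)\bullet$ occupies the first slot. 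Within each group, the three items of Lemma~\mref{prop:eam} collapse the $\epsilon$-dependent factors (vanishing of $\epsilon$ on products involving $R$, commutation of $R^{(2)}$ past $\epsilon(F)\bullet \diamond -$, and multiplicativity of $\epsilon$ on such products), leaving either the Rota-Baxter identity for $R$ on the first slot or that for $R^{(2)}$ on the last two.

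The only real obstacle is bookkeeping: $R^{(3)}$ applied to a single tensor already produces three terms, so the left-hand side a priori splits into twenty-one summands. Organizing them in exactly the same pattern as in the proof of Proposition~\mref{prop:ttensor} and substituting $R^{(2)}$ for the instance of $R$ in the second slot keeps the combinatorics under control and avoids any genuinely new computation.
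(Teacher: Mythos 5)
Your proposal is correct and takes essentially the same route as the paper, which disposes of this proposition with the single remark that it follows ``by a similar argument for the proof of Proposition~\ref{prop:ttensor}''; your bundling $R^{(3)}=R\ot\id+\epsilon(\cdot)\,\bullet\ot R^{(2)}$ is exactly the paper's own first line of the definition of $R^{(3)}$, with the last two slots playing the role of the Rota-Baxter species $(\adf\ot\adf,R^{(2)})$ already established in Proposition~\ref{prop:ttensor}. Your observation that the earlier computation uses only the Rota-Baxter identity on the second slot, together with $R$, $\epsilon$ and Lemma~\ref{prop:eam} on the first, is precisely what makes that ``similar argument'' legitimate.
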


\subsection{The twisted bialgebra structure}
We now apply the universal property of the free Rota-Baxter species $\adf$ and the Rota-Baxter structure on the tensor powers of this algebra to obtain a bialgebra structure on $\adf$.

\begin{defn}~\mcite{Foi}
A {\bf coalgebra in the category of species}, or {\bf twisted coalgebra}, is a triple $(\spep, \Delta,\varepsilon)$, where $\spep$ is a species, $\Delta: \spep \rightarrow \spep \ot \spep$ and $\varepsilon: \spep \rightarrow \spei$ are morphisms of species such that
\begin{align*}
(\Delta \ot \id_{\spep}) \circ \Delta= (\id_{\spep} \ot \Delta) \circ \Delta \, \text{ and } \,   (\varepsilon \ot \id_{\spep}) \circ \Delta=\id_{\spep}=  (\id_{\spep} \ot \varepsilon) \circ \Delta.
\end{align*}
\end{defn}
Equivalently, a twisted coalgebra $\spep$ satisfies that for any two finite sets $X$ and $Y$, there is a map $\Delta_{X,Y}: \spep[X \sqcup Y] \rightarrow \spep[X] \ot \spep[Y]$ such that
\begin{enumerate}
\item For any two bijections $\sigma: X \rightarrow X'$ and $\tau: Y \rightarrow Y'$ of finite sets,
\begin{align*}
(\spep[\sigma] \ot \spep[\tau]) \circ \Delta_{X,Y}= \Delta_{X',Y'} \circ \spep[\sigma \sqcup \tau]: \spep[X \sqcup Y] \rightarrow \spep[X'] \ot \spep[Y'].
\end{align*}

\item For finite sets $X,Y,Z$,
\begin{align*}
(\Delta_{X,Y} \ot \id_{\spep[Z]}) \circ \Delta_{X \sqcup Y, Z}=(\id_{\spep[X]} \ot \Delta_{Y,Z}) \circ \Delta_{X, Y \sqcup Z}.
\end{align*}

\item There is a linear map $\varepsilon_{\emptyset}: \spep[\emptyset] \rightarrow \bfk$ such that for any finite set $X$,
\begin{align*}
(\varepsilon_{\emptyset} \ot \id_{\spep[X]}) \circ \Delta_{\emptyset, X}=\id_{\spep[X]}=  (\id_{\spep[X]} \ot \varepsilon_{\emptyset}) \circ \Delta_{X, \emptyset}.
\end{align*}
\end{enumerate}

\begin{remark}
\begin{enumerate}
\item The species $\spei$ is a twisted coalgebra, where for $k \in \spei[\emptyset]$,
$$\Delta_{\emptyset, \emptyset}(k)=k(1_{\bf k} \ot 1_{\bf k}) \in \spei[\emptyset] \ot \spei[\emptyset].$$

\item \mcite{Foi} If $\spep$ and $\speq$ are twisted coalgebras, then $\spep \ot \speq$ is also a twisted coalgebra.

\end{enumerate}
\end{remark}

\begin{defn}
	A {\bf bialgebra in the category of species}, or {\bf twisted bialgebra}, is a quintuple $(\spep, m, \ell, \Delta, \varepsilon)$, where $(\spep, m, \ell)$ is a twisted algebra, $(\spep, \Delta, \varepsilon)$ is a twisted coalgebra, and one of the following equivalent conditions holds.
	\begin{enumerate}
		\item $\varepsilon: \spep \rightarrow \spei$ and $\Delta: \spep \rightarrow \spep \ot \spep$ are algebra morphisms.
		
		\item $\ell: \spei \rightarrow \spep$ and $m: \spep \ot \spep \rightarrow \spep$ are coalgebra morphisms.
	\end{enumerate}
\end{defn}

To construct the twisted coalgebra on $\adf$, we define a morphism of species
$$g: O \rightarrow \adf \ot \adf$$
as follows. If $X=\{ x\}$ is a singleton set, then $g_X(\bullet x \bullet)=\bullet x \bullet \ot \bullet + \bullet \ot \bullet x \bullet$. Otherwise, $g_X=0$.
Then by Theorem~\ref{thm:freerb} and Proposition~\ref{prop:ttensor}, there is a unique morphism
$$\Delta: \adf \rightarrow \adf \ot \adf$$
of $\rbs$ such that $g=\Delta \circ i$.

\begin{lemma}
For any finite set $X$ and any $F \in \adf[X]$, 
\begin{align}
(\id \ot \epsilon_{\emptyset}) \circ \Delta_{X,\emptyset}(F)=(\epsilon_{\emptyset} \ot \id) \circ \Delta_{\emptyset, X}(F)=F.
\mlabel{eq:counit}
\end{align}
\mlabel{lem:counit}
\end{lemma}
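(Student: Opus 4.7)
The plan is to apply the universal property of $\adf$ as a free Rota-Baxter species on $O$, as was done to define $\Delta$ itself. Define
\[
\phi_R := (\id_{\adf} \ot \epsilon) \circ \Delta \quad \text{and} \quad \phi_L := (\epsilon \ot \id_{\adf}) \circ \Delta,
\]
where we tacitly compose with the right (resp.\ left) unit isomorphism $\adf \ot \spei \cong \adf \cong \spei \ot \adf$ of the Cauchy product. I will show that each of $\phi_R$ and $\phi_L$ is a morphism of Rota-Baxter species from $\adf$ to itself, and that each agrees with $\id_{\adf}$ after precomposition with $i: O \to \adf$. The uniqueness clause in Theorem~\ref{thm:freerb} then forces $\phi_R = \id_{\adf} = \phi_L$, which is exactly \eqref{eq:counit}.

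The key technical step is verifying that $\id_{\adf} \ot \epsilon$ descends to a morphism of Rota-Baxter species $(\adf \ot \adf, m_{\adf \ot \adf}, R^{(2)}) \to (\adf, \diamond, R)$; multiplicativity follows from $\epsilon$ being an algebra morphism (hence so is $\id \ot \epsilon$), and compatibility with the Rota-Baxter operators uses the explicit formula
\[
(\id \ot \epsilon) \circ R^{(2)}(F \ot G) = R(F) \cdot \epsilon(G) + \epsilon(F)\bullet \cdot \epsilon(R(G)) = R\bigl((\id \ot \epsilon)(F \ot G)\bigr),
\]
where the second term vanishes because $\epsilon \circ R = 0$ by construction of $\epsilon$ as a morphism of Rota-Baxter species into the trivial one $\spei$. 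Since $\Delta$ is a morphism of Rota-Baxter species by its defining universal property, so is the composition $\phi_R$; the argument for $\phi_L$ is symmetric.

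To finish, I evaluate on a singleton $X = \{x\}$ and $\bullet x \bullet \in O[\{x\}]$:
\[
\phi_R(i(\bullet x \bullet)) = (\id \ot \epsilon)\bigl(\bullet x \bullet \ot \bullet + \bullet \ot \bullet x \bullet\bigr) = \bullet x \bullet \cdot \epsilon_\emptyset(\bullet) + \bullet \cdot \epsilon_{\{x\}}(\bullet x \bullet) = \bullet x \bullet,
\]
using Lemma~\ref{lem:eam} for the second summand. Hence $\phi_R \circ i = i = \id_{\adf} \circ i$, and the same holds for $\phi_L$. By uniqueness of the morphism of Rota-Baxter species extending $i$, we conclude $\phi_R = \phi_L = \id_{\adf}$, as desired. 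The main obstacle here is purely bookkeeping: one must be careful with the right/left unit identifications $\adf \ot \spei \cong \adf$ so that the Rota-Baxter structure on $\adf \ot \adf$ given by Proposition~\ref{prop:ttensor} is correctly transported, but the substantive identity driving the proof is simply $\epsilon \circ R = 0$.
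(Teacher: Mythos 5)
Your proof is correct and follows essentially the same route as the paper: both define the maps $(\id\ot\epsilon)\circ\Delta$ and $(\epsilon\ot\id)\circ\Delta$, verify they are endomorphisms of Rota--Baxter species using the key identity $\epsilon\circ R=0$, check agreement with $\id_{\adf}$ on the generators $\bullet x\bullet$, and invoke the uniqueness clause of the universal property of the free Rota--Baxter species. The only cosmetic difference is that you package the Rota--Baxter compatibility as the statement that $\id\ot\epsilon$ is a morphism $(\adf\ot\adf,R^{(2)})\to(\adf,R)$ before composing with $\Delta$, whereas the paper computes $\rho_X\circ R_X$ directly.
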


\begin{proof}
Let us consider the morphism of species $\rho:\adf\longrightarrow \adf$ defined as follows: for any finite set $X$, for any $F\in \adf[X]$,
\[\rho_X(F)=(\epsilon_\emptyset \circ \id)\circ \Delta_{\emptyset,X}(F).\]
By composition, it is an algebra morphism. Let us prove that it is a Rota-Baxter algebra endomorphism of $\adf$. Let $X$ be a finite set and $F\in \adf[X]$. We put $\Delta_{\emptyset,X}(F)=\sum F^{(1)}\otimes F^{(2)}$.
\begin{align*}
\rho_X\circ R_X(F)&=(\epsilon_\emptyset \circ \id)\circ \Delta_{\emptyset,X}\circ R_X(F)\\
&=(\epsilon_\emptyset \circ R_\emptyset \otimes \id)\circ \Delta_{\emptyset,X}(F)+\epsilon_\emptyset(F^{(1)})\epsilon_\emptyset(\bullet)R_X(F^{(2)})\\
&=0+R_X\left(\epsilon_\emptyset(F^{(1)})F^{(2)}\right)\\
&=R_X\circ \rho_X(F),
\end{align*}
as $\epsilon\circ R=0$. So $\rho$ is a morphism of Rota-Baxter species. Moreover, for any singleton set $X=\{x\}$, 
\begin{align*}
\Delta_{\emptyset,X}(\bullet x\bullet)&=\bullet \otimes \bullet x\bullet,
\end{align*}
so $\rho_X(\bullet x\circ \bullet)=x\bullet x\circ \bullet$. In other words, $\rho\circ i=\id_\adf \circ i$. By the unicity part of the universal property, $\rho=\id_\adf$. Consequently, for any finite set $X$ and any $F \in \adf[X]$, 
\[(\id \ot \epsilon_{\emptyset}) \circ \Delta_{X,\emptyset}(F)=F.\]

We now consider the morphism of species $\rho':\adf\longrightarrow \adf$ defined as follows: for any finite set $X$, for any $F\in \adf[X]$,
\[\rho'_X(F)=(\id \circ \epsilon_\emptyset \circ)\circ \Delta_{X,\emptyset}(F).\]
By composition, it is an algebra morphism. Let us prove that it is a Rota-Baxter algebra endomorphism of $\adf$. Let $X$ be a finite set and $F\in \adf[X]$. We put $\Delta_{X,\emptyset}(F)=\sum F_{(1)}\otimes F_{(2)}$.
\begin{align*}
\rho'_X\circ R_X(F)&=(\id \otimes \epsilon_X)\circ \Delta_{X,\emptyset}\circ R_X(F)\\
&=R_X(F_{(1)})\epsilon_\emptyset(F_{(2)})+\epsilon_X(F_{(1)})\bullet \epsilon_\emptyset \circ R_\emptyset(F_{(2)})\\
&=R_X\left(F_{(1)}\epsilon_\emptyset(F_{(2)}\right)+0\\
&=R_X\circ \rho'_X(F),
\end{align*}
as $\epsilon\circ R=0$. So $\rho'$ is a Rota-Baxter species. As $\Delta_{X,\emptyset}(\bullet x\bullet )=\bullet x\bullet\otimes \bullet$, we conclude as for $\rho$ that $\rho'=\id_\adf$, which gives the missing equality.  
\end{proof}

\begin{lemma}
For any finite set $X$ with $X_1 \sqcup X_2 =X$ and any $F \in \adf[X]$, assume $\Delta_{X_1, X_2}(F)=\sum \limits_{(F)} F_{(1)} \ot F_{(2)}$, then
\begin{align}
\Delta_{X_1, X_2}(\epsilon_X(F) \bullet)=\sum\limits_{(F)} \epsilon_{X_1}(F_{(1)}) \bullet \ot \epsilon_{X_2}(F_{(2)}) \bullet.
\label{eq:dvb}
\end{align}
\label{lem:dvb}
\end{lemma}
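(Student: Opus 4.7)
The plan is to prove the identity by case analysis on the value of $\epsilon_X(F)$, which by Lemma~\mref{lem:eam} is either $1_\bfk$ (only when $X = \emptyset$ and $F = \bullet$) or $0$. In the first case, $X_1 = X_2 = \emptyset$ as well, and since $\Delta$ is a morphism of Rota-Baxter species, and in particular of twisted algebras, it must send the unit $\bullet \in \adf[\emptyset]$ to the unit $\bullet \ot \bullet$ of $\adf \ot \adf$. Thus both sides of Eq.~\meqref{eq:dvb} equal $\bullet \ot \bullet$, with the single summand on the right being $\epsilon_\emptyset(\bullet)\bullet \ot \epsilon_\emptyset(\bullet)\bullet$.

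In the remaining case $\epsilon_X(F) = 0$, the left-hand side of Eq.~\meqref{eq:dvb} is $\Delta_{X_1, X_2}(0) = 0$, so the task reduces to showing that the right-hand side vanishes as well. I would split this into two sub-cases. If $X \neq \emptyset$, then $X_1 \sqcup X_2 = X$ forces at least one of $X_1, X_2$ to be nonempty; for any such index $i$, Lemma~\mref{lem:eam} shows that $\epsilon_{X_i}$ vanishes identically on $\adf[X_i]$, hence each summand $\epsilon_{X_1}(F_{(1)}) \bullet \ot \epsilon_{X_2}(F_{(2)})\bullet$ is zero.

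The only situation requiring any genuine input beyond the definition of $\epsilon$ is $X = \emptyset$ with $F \neq \bullet$, in which case $X_1 = X_2 = \emptyset$. Here I would invoke the counit identity just established in Lemma~\mref{lem:counit}: applying $\epsilon_\emptyset$ to both sides of $(\id \ot \epsilon_\emptyset) \circ \Delta_{\emptyset, \emptyset}(F) = F$ and using $\epsilon_\emptyset(F) = 0$ gives
$$\sum_{(F)} \epsilon_\emptyset(F_{(1)})\, \epsilon_\emptyset(F_{(2)}) \;=\; \epsilon_\emptyset(F) \;=\; 0,$$
so the right-hand side of Eq.~\meqref{eq:dvb} again vanishes. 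I do not anticipate a serious obstacle: the main point is to recognize the clean three-case split and to apply the counit identity in the one situation that is not immediate from the explicit description of $\epsilon$.
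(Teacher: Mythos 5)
Your proof is correct, and it reaches the conclusion by a genuinely different route in the one case that requires real work. The paper also splits into the cases $F=\bullet$, $|X|>0$, and $X=\emptyset$ with $\dep(F)>0$, and handles the first two exactly as you do (the unit computation $\Delta_{\emptyset,\emptyset}(\bullet)=\bullet\ot\bullet$, and the identical vanishing of $\epsilon_{X_i}$ on a nonempty component). The difference is in the last case: the paper unwinds the recursive construction of $\Delta$ from the proof of Theorem~\mref{thm:freerb}, writing $F=B^+(\overline{F})$ and
\begin{align*}
\Delta_{\emptyset,\emptyset}(F)=R^{(2)}_{\emptyset}\big(\Delta_{\emptyset,\emptyset}(\overline{F})\big)
=\sum_{(\overline{F})}\Big(R_{\emptyset}(\overline{F}_{(1)})\ot \overline{F}_{(2)}+\epsilon_{\emptyset}(\overline{F}_{(1)})\bullet\ot R_{\emptyset}(\overline{F}_{(2)})\Big),
\end{align*}
and then observes that every tensor factor pair contains a term in the image of $R$, which is killed by $\epsilon$ since $\epsilon\circ R=0$. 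You instead apply $\epsilon_\emptyset$ to the counit identity of Lemma~\mref{lem:counit} to get $\sum_{(F)}\epsilon_\emptyset(F_{(1)})\epsilon_\emptyset(F_{(2)})=\epsilon_\emptyset(F)=0$, which kills the right-hand side without knowing anything about the explicit shape of $\Delta_{\emptyset,\emptyset}(F)$. Your argument is shorter and more robust (it would survive any other construction of a counital $\Delta$), at the cost of depending on Lemma~\mref{lem:counit}; since that lemma is established before this one and its proof does not use the present statement, there is no circularity. The paper's version is self-contained relative to the construction of $\Delta$ and could in principle be placed before the counit lemma. Both are valid.
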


\begin{proof}
By the proof of Theorem~\mref{thm:freerb}, $\Delta: \adf \rightarrow \adf \ot \adf$ is defined by induction on $|X|$.

If $|X|=0$, then $X=\emptyset$. For a rooted forest $F \in \adf[\emptyset]$, $\Delta_{\emptyset, \emptyset}(F)$ is defined by induction on $\dep(F)$. If $\dep(F)=0$, then $F=\bullet$ and
\begin{align*}
\Delta_{\emptyset, \emptyset}(\bullet)=\bullet \ot \bullet.
\end{align*}
Thus $\Delta_{\emptyset, \emptyset}(\epsilon_{\emptyset} (\bullet) \bullet)=\Delta_{\emptyset, \emptyset} (\bullet) =\bullet \ot \bullet =\epsilon_{\emptyset}(\bullet) \bullet \ot \epsilon_{\emptyset}(\bullet) \bullet$, Eq.~\meqref{eq:dvb} holds.

For the induction step of $\dep(F)>0$, assume $F=B^+(\overline{F})$ for some $\overline{F} \in \adf[\emptyset]$. Assume $\Delta_{\emptyset, \emptyset} (\overline{F})=\sum\limits_{(\overline{F})} \overline{F}_{(1)} \ot \overline{F}_{(2)}$,  then
\begin{align*}
\Delta_{\emptyset, \emptyset}(F)=R^{(2)}_{\emptyset}  (\Delta_{\emptyset, \emptyset}(\overline{F}))=\sum \limits_{(\overline{F})}(R_{\emptyset}(\overline{F}_{(1)}) \ot \overline{F}_{(2)}+ \epsilon_{\emptyset}(\overline{F}_{(1)})\bullet \ot R_{\emptyset}(\overline{F}_{(2)})).
\end{align*}
Thus both sides of Eq.~\meqref{eq:dvb} equal to zero.

If $|X|>0$, then at least one of $X_1, X_2$ is nonempty and so $\epsilon_{X_1}(\overline{F}_{(1)})=0$ or $\epsilon_{X_2}(\overline{F}_{(2)})=0$. Thus both sides of Eq.~\meqref{eq:dvb} equal to zero.
\end{proof}

\begin{prop}
The triple $(\adf, \Delta, \epsilon)$ is a twisted coalgebra.
\label{prop:tcoalgebra}
\end{prop}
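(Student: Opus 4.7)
The verification splits into two parts: the counit conditions $(\epsilon_\emptyset\ot \id_{\adf[X]})\circ \Delta_{\emptyset,X}=\id_{\adf[X]}=(\id_{\adf[X]}\ot \epsilon_\emptyset)\circ \Delta_{X,\emptyset}$, and coassociativity $(\Delta\ot \id_{\adf})\circ \Delta=(\id_{\adf}\ot \Delta)\circ \Delta$. The first is exactly Eq.~\meqref{eq:counit} of Lemma~\mref{lem:counit}, so the real work lies in establishing the second.

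My plan is to apply the universal property of $\adf$ (Theorem~\mref{thm:freerb}) to the Rota-Baxter species $(\adf\ot \adf\ot \adf, R^{(3)})$ provided by Proposition~\mref{prop:trb2}. I will exhibit both $(\Delta\ot \id_{\adf})\circ \Delta$ and $(\id_{\adf}\ot \Delta)\circ \Delta$ as morphisms of Rota-Baxter species from $\adf$ to $\adf\ot \adf\ot \adf$ and check that they agree after composition with $i:O\to \adf$; the uniqueness clause then forces equality. A useful preparatory step is the identity $(\epsilon\ot \epsilon)\circ \Delta=\epsilon$ as morphisms $\adf\to \spei$: since $\epsilon$ and $\Delta$ are both morphisms of Rota-Baxter species, so is the composite into $\spei\ot \spei\cong \spei$, and both maps send $\bullet x\bullet$ to $0$, so the universal property yields equality.

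The core step is to check that $\Delta\ot \id_{\adf}$ and $\id_{\adf}\ot \Delta$ are morphisms of Rota-Baxter species from $(\adf\ot \adf, R^{(2)})$ to $(\adf\ot \adf\ot \adf, R^{(3)})$. The twisted algebra axioms are immediate, since each map is a tensor of twisted algebra morphisms. For Rota-Baxter compatibility of $\Delta\ot \id_{\adf}$, I expand
\[
(\Delta\ot \id_{\adf})\circ R^{(2)}(F\ot H)=\Delta(R_{X_1}(F))\ot H+\epsilon_{X_1}(F)\Delta(\bullet)\ot R_{X_2}(H),
\]
use $\Delta\circ R=R^{(2)}\circ \Delta$ together with $\Delta_{\emptyset,\emptyset}(\bullet)=\bullet\ot \bullet$, and compare with $R^{(3)}\circ (\Delta\ot \id_{\adf})(F\ot H)$ expanded via the definition of $R^{(3)}$. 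The identity $(\epsilon\ot \epsilon)\circ \Delta=\epsilon$ is precisely what is needed to reconcile the third summand of $R^{(3)}$ with $\epsilon(F)\bullet\ot \bullet\ot R(H)$. The verification for $\id_{\adf}\ot \Delta$ is entirely symmetric.

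Composing these with $\Delta$ yields that $(\Delta\ot \id_\adf)\circ \Delta$ and $(\id_\adf\ot \Delta)\circ \Delta$ are both morphisms of Rota-Baxter species from $\adf$ to $(\adf\ot \adf\ot \adf, R^{(3)})$. A short direct computation shows that on $\bullet x\bullet$ each equals
\[
\bullet x\bullet \ot \bullet\ot \bullet+\bullet\ot \bullet x\bullet\ot \bullet+\bullet\ot \bullet\ot \bullet x\bullet,
\]
while both vanish on components of $O$ indexed by non-singleton sets. Hence they agree on $i(O)$, and the universal property of $\adf$ forces them to coincide, yielding coassociativity. I expect the main obstacle to be the bookkeeping in the Rota-Baxter compatibility of $\Delta\ot \id_{\adf}$: $R^{(3)}$ produces three summands while $(\Delta\ot \id)\circ R^{(2)}$ produces two, and the reconciliation rests on the preparatory identity for $(\epsilon\ot \epsilon)\circ \Delta$; everything else is a formal invocation of the universal property.
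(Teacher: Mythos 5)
Your proposal is correct and follows essentially the same route as the paper: both establish coassociativity by exhibiting $(\Delta\ot\id)\circ\Delta$ and $(\id\ot\Delta)\circ\Delta$ as morphisms of Rota-Baxter species into $(\adf\ot\adf\ot\adf,R^{(3)})$ that agree with $g'$ on $i(O)$ and then invoking the uniqueness clause of Theorem~\mref{thm:freerb}, with the counit axiom supplied by Lemma~\mref{lem:counit}. The only minor divergence is that you obtain the reconciling identity $(\epsilon\ot\epsilon)\circ\Delta=\epsilon$ by a further application of the universal property, whereas the paper proves the equivalent statement (Lemma~\mref{lem:dvb}) by induction along the construction of $\Delta$; both are sound.
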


\begin{proof}
Define another morphism of species
\begin{align*}
	g': &O \rightarrow \adf \ot \adf \ot \adf,\\ g'_X(F):=&\left\{\begin{array}{ll}\bullet x \bullet \ot \bullet \ot \bullet + \bullet \ot \bullet x \bullet \ot \bullet + \bullet \ot \bullet \ot \bullet x \bullet, & X=\{ x\}, F=\bullet x \bullet,\\
	0, & \text{otherwise.}
	\end{array} \right .
\end{align*}
Then by Theorem~\mref{thm:freerb} and Proposition~\mref{prop:trb2}, there is a unique morphism
$$\Delta': \adf \rightarrow \adf \ot \adf \ot \adf$$
of $\rbs$ such that $g'=\Delta' \circ i$.

As $\Delta: \adf \rightarrow \adf \ot \adf$ is a morphism of $\rbs$, $$(\Delta \ot \id) \circ \Delta: \adf \rightarrow \adf \ot \adf \ot \adf$$
and
$$(\id \ot \Delta) \circ \Delta: \adf \rightarrow \adf \ot \adf \ot \adf$$
are morphisms of twisted algebras. For $F \in \adf[X]$ with $X_1 \sqcup X_2 \sqcup X_3=X$, we have
\begin{align*}
&\ (\Delta_{X_1,X_2} \ot \id) \circ \Delta_{X_1 \sqcup X_2, X_3} \circ R_X(F)\\
=&\ (\Delta_{X_1, X_2} \ot \id) \circ R^{(2)}_X \circ \Delta_{X_1 \sqcup X_2, X_3}(F) \quad \text{(by the proof of Theorem~\mref{thm:freerb})}\\
=&\ (\Delta_{X_1, X_2} \ot \id) \circ (R_{X_1 \sqcup X_2} \ot \id+ \bullet \epsilon_{X_1 \sqcup X_2} \ot R_{X_3}) \circ \Delta_{X_1 \sqcup X_2, X_3}(F)\\
=&\ \left((\Delta_{X_1, X_2} \circ R_{X_1 \sqcup X_2}) \ot \id+ (\Delta_{X_1,X_2} \circ (\bullet \epsilon_{X_1 \sqcup X_2}) \ot R_{X_3})\right) \circ \Delta_{X_1 \sqcup X_2,X_3}(F)\\
=&\ \left((R^{(2)}_{X_1 \sqcup X_2} \circ \Delta_{X_1,X_2}) \ot \id+(\bullet \epsilon_{X_1} \ot \bullet \epsilon_{X_2} \ot R_{X_3})\circ( \Delta_{X_1,X_2} \ot \id) \right) \circ \Delta_{X_1 \sqcup X_2,X_3}(F) \quad \text{(by Lemma~\mref{lem:dvb})}\\
=&\ (R_{X_1} \ot \id \ot \id+ \bullet \epsilon_{X_1} \ot R_{X_2} \ot \id+ \bullet \epsilon_{X_1} \ot \bullet \epsilon_{X_2} \ot R_{X_3}) \circ( \Delta_{X_1,X_2} \ot \id) \circ \Delta_{X_1 \sqcup X_2,X_3}(F)\\
=&\ R^{(3)}_Xp \circ (\Delta_{X_1, X_2} \ot \id) \circ \Delta_{X_1 \sqcup X_2, X_3}(F).
\end{align*}
Hence $(\Delta \ot \id) \circ \Delta: \adf \rightarrow \adf \ot \adf \ot \adf$ is a morphism of $\rbs$. Similarly, $(\id \ot \Delta) \circ \Delta: \adf \rightarrow \adf \ot \adf \ot \adf$ is also a morphism of $\rbs$. Moreover, for any singleton set $X=\{x\}$ with $X_1 \sqcup X_2 \sqcup X_3=X$, there is
\begin{align*}
 (\Delta_{X_1,X_2} \ot \id) \circ \Delta_{X_1 \sqcup X_2, X_3}(\bullet x \bullet)
= (\id \ot \Delta_{X_2, X_3}) \circ \Delta_{X_1, X_2 \sqcup X_3}(\bullet x \bullet),
\end{align*}
which implies that
$$(\Delta \ot \id) \circ \Delta \circ i=g'=(\id \ot \Delta) \circ \Delta \circ i.$$
By the uniqueness of $\Delta'$ such that $\Delta' \circ i=g'$, we conclude $(\Delta \ot \id) \circ \Delta=\Delta'=(\id \ot \Delta) \circ \Delta$.
Then by Lemma~\mref{lem:counit}, $(\adf, \Delta, \epsilon)$ is a twisted coalgebra.
\end{proof}

\begin{theorem}
The quintuple $(\adf, m, \bullet, \Delta, \epsilon)$ is a twisted bialgebra.
\mlabel{prop:tbialgebra}
\end{theorem}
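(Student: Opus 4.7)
The plan is to verify the twisted bialgebra axioms by directly invoking the structures already established. The twisted algebra structure is Proposition~\ref{prop:talgebra}, and the twisted coalgebra structure is Proposition~\ref{prop:tcoalgebra}. What remains is the compatibility: either that $\Delta$ and $\epsilon$ are morphisms of twisted algebras, or equivalently that $m$ and the unit $\bullet$ are morphisms of twisted coalgebras. I would choose the first formulation, since it essentially comes for free from the way $\Delta$ and $\epsilon$ were constructed.

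More precisely, the counit $\epsilon: \adf \to \spei$ was defined as the unique morphism of Rota-Baxter species extending the zero map on $O$, so in particular it is a morphism of twisted algebras. Likewise, the comultiplication $\Delta: \adf \to \adf \ot \adf$ was defined via Theorem~\ref{thm:freerb} and Proposition~\ref{prop:ttensor} as the unique morphism of Rota-Baxter species extending $g: O \to \adf \ot \adf$, and therefore $\Delta$ is a morphism of twisted algebras from $(\adf, m, \bullet)$ to $(\adf \ot \adf, m_{\adf \ot \adf}, \bullet \ot \bullet)$. This handles both required compatibilities in one stroke.

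The only subtlety to confirm is that the ambient twisted algebra on $\adf \ot \adf$ used in the Rota-Baxter species structure of Proposition~\ref{prop:ttensor} is indeed the one defined by $m_{\adf \ot \adf} := (m \ot m) \circ (\id \ot c_{\adf, \adf} \ot \id)$, which is the standard twisted tensor-product algebra and is precisely the one with respect to which $\Delta$ being an algebra morphism gives the bialgebra compatibility. Since this was how the tensor product was endowed with a twisted algebra structure just before Proposition~\ref{prop:ttensor}, no extra checking is needed.

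Thus the proof reduces to the following assembly: by Propositions~\ref{prop:talgebra} and~\ref{prop:tcoalgebra}, $(\adf, m, \bullet)$ is a twisted algebra and $(\adf, \Delta, \epsilon)$ is a twisted coalgebra; by construction $\Delta$ and $\epsilon$ are morphisms of Rota-Baxter species and, a fortiori, of the underlying twisted algebras, so the bialgebra compatibility axiom (condition (a) in the definition of twisted bialgebra) holds. I anticipate no genuine obstacle here, as all the work has been pushed into the construction of $\Delta$ and $\epsilon$ via the universal property; this final theorem is essentially a bookkeeping statement collecting the structural properties proved in the preceding propositions.
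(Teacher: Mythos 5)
Your proposal is correct and follows essentially the same route as the paper: both obtain the compatibility by noting that $\Delta$ and $\epsilon$ are, by their construction via the universal property of the free Rota-Baxter species, morphisms of twisted algebras, and then assemble this with Propositions~\ref{prop:talgebra} and~\ref{prop:tcoalgebra}. No gap to report.
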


\begin{proof}
By the construction of $\Delta: \adf \rightarrow \adf \ot \adf$, it is a morphism of twisted algebras.
By Lemma~\ref{lem:eam}, $\epsilon: \adf \rightarrow \spei$ is a morphism of twisted algebras.
By Propositions~\ref{prop:talgebra} and~\ref{prop:tcoalgebra}, we get that $(\adf, m, \bullet, \Delta, \varepsilon)$ is a twisted bialgebra.
\end{proof}

\begin{remark}
\begin{enumerate}
	\item 
By~\cite[\S 8.4.1]{AM10}, if a twisted bialgebra $\spep$ is connected in the sense that $\spep[\emptyset]$ is one-dimensional over the base field, then $\spep$ is a twisted Hopf algebra. In our case, the species $\adf$ is not connected since $\adf[\emptyset]$ contains all trees with one leaf. In particular,
$\begin{tikzpicture}[scale = 0.6] 
	\node at (0,0.1) {$\bullet$};
	\node at (0.3,0) {,};
\end{tikzpicture}
\begin{tikzpicture}[scale = 0.6] 
	\node at (0,-0.5) {$\bullet$};
	\node at (0,0.5) {$\bullet$};
	\draw (0,-0.5)-- (0,0.5);
\end{tikzpicture}
$
are both in $\adf[\emptyset]$. Thus one cannot apply that result and conclude that $\adf$ is a twisted Hopf algebra. 
In fact, note that $\adf[\emptyset]=\bfk\{R^n(\bullet),\:n\geq 0\}$ and an easy induction proves that for any $n\geq 0$,
\[\Delta_{\emptyset,\emptyset}(R^n(\bullet))=\sum_{k=0}^n R^k(\bullet)\otimes R^{n-k}(\bullet).\]

\item 
On the other hand, as shown in~\mcite{ZGG,ZXG}, a filtration and coproduct can be given to free Rota-Baxter algebras such that the resulting filtered bialgebra is connected, thereby yielding a Hopf algebra structure on free Rota-Baxter algebras. It would be interesting to lift this construction to species and show that the new Rota-Baxter species is a twisted Hopf algebra. 
\end{enumerate}
\end{remark}

\section{Fock functors and colored Fock functors}
\mlabel{sec:fock}
In this section, we define the Fock functor from Rota-Baxter species to Rota-Baxter graded algebras. As consequence of the bialgebra structure on the free Rota-Baxter species in the previous section, we recover the construction of the free Rota-Baxter algebra~\mcite{EG,Gub,ZXG} by direct construction.

We first recall the concept of Fock functors~\cite{AM10,Foi}. For each $n \geq 0$, denote the set $\{1, \ldots, n \}$ by $\underline{n}$ with the convention $\underline{0}=\emptyset$. For a left $\mathfrak{S}_n$-module $V$, denote the space of coinvariants of $V$ by $\Coinv(V)$, i.e.,
\begin{align*}
\Coinv(V):=V / \mathrm{Vect}(x- \alpha \cdot x, x \in V, \alpha \in \mathfrak{S}_n).
\end{align*}

\begin{defn} \mcite{Foi}
\begin{enumerate}
\item ({\bf Full Fock functor}) Let $\spep$ be a species. Define $\displaystyle \ff (\spep) := \bigoplus_{n=0}^{\infty} \spep[\underline{n}]$. If $f: \spep \rightarrow \speq$ is a morphism of specie, define
\begin{equation*}
\ff (f):
\left\{
\begin{array}{ll}
\ff (\spep)  & \rightarrow \ff (\speq), \\
x \in \spep[ \underline{n}]   & \mapsto f_{\underline{n}} (x) \in \speq[\underline{n}].\\
\end{array}
\right .
\end{equation*}
Then $\ff$ is a functor from the category of species to the category of graded vector spaces.

\item ({\bf Bosonic Fock functor}) Let $\spep$ be a species. Define $\displaystyle \bff(\spep):= \bigoplus_{n=0}^{\infty} \Coinv(\spep[\underline{n}])$. If $f: \spep \rightarrow \speq$ is a morphism of species. Define
\begin{equation*}
\bff (f):
\left\{
\begin{array}{ll}
\bff (\spep)  & \rightarrow \bff (\speq), \\
\overline{x} \in \Coinv(\spep[ \underline{n}])   & \mapsto \overline{f_{\underline{n}} (x)} \in \Coinv(\speq[\underline{n}]).\\
\end{array}
\right .
\end{equation*}
Then $\bff$ is a functor from the category of species to the category of graded vector spaces.

\end{enumerate}
\label{def:functor}
\end{defn}

For $m,n \in \mathbb{N}$, denote by $\sigma_{m,n}:  \underline{m} \sqcup \underline{n} \rightarrow \underline{m+n}$ the bijection
\begin{equation*}
\sigma_{m,n}(i)=
\left\{
\begin{array}{ll}
i  & \text{if $i \in \underline{m}$}, \\
i+m  & \text{if $i \in \underline{n}$}.\\
\end{array}
\right .
\end{equation*}
Then

\begin{lemma}\label{lem:graded} \mcite{Foi}
Let $\spep$ be a twisted algebra (resp. coalgebra, bialgebra).
\begin{enumerate}
\item \label{it:graded}
 $\ff(\spep)$ is a graded bialgebra, in which
\begin{enumerate}
	\item the product given by
    \begin{align*}
    x \cdot y=\spep[\sigma_{m,n}] \circ m_{\spep}(x \ot y), \quad x \in \spep[\underline{m}], y \in \spep[\underline{n}],  m,n \in \mathbb{N};
    \end{align*}
\item the unit is $1_{\spep} \in \spep[\emptyset]$;
\item the coproduct is given by
\begin{align*}
	\Delta(x)=\sum \limits_{i=0}^{m}\spep[\sigma_{i,m-i}^{-1}] \circ \Delta_{\spep}(x), \quad x \in \spep[\underline{m}], m \in \mathbb{N};
\end{align*}
\item  the counit is given by
\begin{align*}
\varepsilon(x)=\varepsilon_{\spep}(x), \quad x \in \ff(\spep).
\end{align*}
\end{enumerate}
\item $\bff(\spep)$ is a graded bialgebra.
\end{enumerate}
\label{lem:aa}
\end{lemma}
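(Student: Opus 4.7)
The plan is to transport the twisted axioms on $\spep$ to the corresponding graded axioms on $\ff(\spep)$ componentwise, using two ingredients: the naturality of $m_\spep$ and $\Delta_\spep$ under bijections, and the combinatorial identities satisfied by the canonical bijections $\sigma_{m,n}: \underline{m} \sqcup \underline{n} \to \underline{m+n}$.

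For associativity on $\ff(\spep)$, I would take $x \in \spep[\underline{m}], y \in \spep[\underline{n}], z \in \spep[\underline{p}]$ and expand $(x \cdot y) \cdot z$ via the defining formula. The naturality identity
\begin{align*}
m_{\underline{m+n}, \underline{p}} \circ (\spep[\sigma_{m,n}] \ot \id_{\spep[\underline{p}]}) = \spep[\sigma_{m,n} \sqcup \id_{\underline{p}}] \circ m_{\underline{m} \sqcup \underline{n}, \underline{p}}
\end{align*}
pushes $\spep[\sigma_{m,n}]$ to the outside, after which twisted associativity converts the expression into $\spep[\sigma_{m+n,p} \circ (\sigma_{m,n} \sqcup \id)] \circ m_{\underline{m}, \underline{n}, \underline{p}}(x \ot y \ot z)$ in the notation of Remark~\ref{rem:definem}. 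The same manipulation on $x \cdot (y \cdot z)$ yields $\spep[\sigma_{m,n+p} \circ (\id \sqcup \sigma_{n,p})] \circ m_{\underline{m}, \underline{n}, \underline{p}}(x \ot y \ot z)$, and the two bijections agree because each equals the canonical identification $\underline{m} \sqcup \underline{n} \sqcup \underline{p} \to \underline{m+n+p}$. Unitality is immediate since $\sigma_{0,m}$ and $\sigma_{m,0}$ are identities and $m_{\emptyset, \underline{m}}(1_\spep \ot x) = x$.

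Coassociativity and counitality for the coproduct follow by the dual argument, exchanging $m$ with $\Delta$ and reversing the arrows. For the graded bialgebra compatibility $\Delta(x \cdot y) = \Delta(x) \cdot \Delta(y)$, I would expand both sides, apply the twisted bialgebra axiom to the single block $\underline{m} \sqcup \underline{n}$, and match the resulting sums over splittings through the combinatorial identity that splittings of $\underline{m+n}$ into two blocks correspond to pairs of splittings of $\underline{m}$ and $\underline{n}$ composed with shuffles. The main obstacle here is careful bookkeeping of which $\sigma_{k,\ell}$ get absorbed into which tensor factors, together with the symmetry exchange that intervenes in the twisted bialgebra axiom.

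For the bosonic functor $\bff(\spep)$, the task is to show that every structure map descends to coinvariants. For the product, given $\alpha \in \mathfrak{S}_m$ and $\beta \in \mathfrak{S}_n$, naturality gives
\begin{align*}
\spep[\alpha](x) \cdot \spep[\beta](y) = \spep[\sigma_{m,n} \circ (\alpha \sqcup \beta) \circ \sigma_{m,n}^{-1}](x \cdot y),
\end{align*}
and since $\sigma_{m,n} \circ (\alpha \sqcup \beta) \circ \sigma_{m,n}^{-1} \in \mathfrak{S}_{m+n}$, the class of $x \cdot y$ in $\Coinv(\spep[\underline{m+n}])$ depends only on the classes of $x$ and $y$. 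Descent of the coproduct and counit is analogous, using that the sum over $i$ in the coproduct formula is $\mathfrak{S}_m$-equivariant after quotienting. Once descent is established, the graded bialgebra axioms on $\bff(\spep)$ follow immediately from those on $\ff(\spep)$ by passage to quotients.
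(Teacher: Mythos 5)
The paper offers no proof of this lemma to compare against: it is quoted from \cite{Foi} (see also \cite{AM10}). Your outline is the standard argument for that result, and the algebra half is correct and essentially complete: pushing $\spep[\sigma_{m,n}]$ to the outside by naturality, reducing associativity to the identity $\sigma_{m+n,p}\circ(\sigma_{m,n}\sqcup\id)=\sigma_{m,n+p}\circ(\id\sqcup\sigma_{n,p})$ of bijections onto $\underline{m+n+p}$, and obtaining descent of the product to coinvariants from $\sigma_{m,n}\circ(\alpha\sqcup\beta)\circ\sigma_{m,n}^{-1}\in\mathfrak{S}_{m+n}$ is exactly how this is done.

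The one place where ``dual/analogous'' conceals real content is the coproduct. The displayed formula $\sum_{i=0}^{m}\spep[\sigma_{i,m-i}^{-1}]\circ\Delta_{\spep}$ must be read as a sum over \emph{all} decompositions $A\sqcup B=\underline{m}$, each followed by the standardization bijections of $A$ and $B$, not merely over the $m+1$ interval decompositions $\{1,\dots,i\}\sqcup\{i+1,\dots,m\}$. Under the interval-only reading, the compatibility $\Delta(x\cdot y)=\Delta(x)\cdot\Delta(y)$ that you invoke fails already at the level of term counts ($m+n+1$ terms on the left versus $(m+1)(n+1)$ on the right), and descent to $\bff(\spep)$ fails because $\alpha\in\mathfrak{S}_m$ does not preserve intervals. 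Under the all-decompositions reading, your shuffle matching for the compatibility is correct, and descent of $\Delta$ holds because $\alpha$ permutes the set of decompositions while altering each standardization only by permutations of the two parts, which vanish in $\Coinv(\spep[\underline{i}])\ot\Coinv(\spep[\underline{m-i}])$; note this is genuinely different from the product case, where everything lands in a single graded piece. Your write-up should commit explicitly to the correct reading and spell out this equivariance; the remainder of the argument is fine.
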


\begin{defn}
A {\bf graded Rota-Baxter algebra} is a graded algebra $\displaystyle A=\bigoplus_{n=0}^{\infty} A_n$ with a Rota-Baxter operator $P:A \rightarrow A$ such that $P(A_n) \subseteq A_n$ for each $n \geq 0$.
\end{defn}

\begin{lemma}
Let $(\spep, m, \ell, R)$ be a $\rbs$ of weight $\lambda$. Then
\begin{enumerate}
\item $\ff(\spep)$ is a graded Rota-Baxter algebra of the same weight, with the Rota-Baxter operator $P_{\ff}$ defined by
\begin{align*}
P_{\ff}(x) :=R_{\underline{m}} (x), \quad x \in \spep[\underline{m}], m \in \mathbb{N},
\end{align*}
for the grafting operator $R_{\underline{m}}$ defined in Eq.~\meqref{eq:rbo}.
\mlabel{it:bb1}
\item $\bff(\spep)$ is a graded Rota-Baxter algebra of the same weight, with the Rota-Baxter operator $P_{\bff}$ defined by
    \begin{align*}
    P_{\bff}(x) :=\overline{R_{\underline{m}}(x)}, \quad \overline{x} \in \Coinv(\spep[\underline{m}]), m \in \mathbb{N}.
    \end{align*}
\mlabel{it:bb2}
\end{enumerate}
\label{lem:bb}
\end{lemma}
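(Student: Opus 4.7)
The plan is to verify the two items in order, with the naturality of $R$ as the sole nontrivial input; everything else is bookkeeping.

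For part~\eqref{it:bb1}, grading-preservation is immediate since $R_{\underline{m}}$ maps $\spep[\underline{m}]$ into itself. To check the Rota-Baxter identity, I would take $x\in \spep[\underline{m}]$ and $y\in \spep[\underline{n}]$ and compute $P_{\ff}(x)\cdot P_{\ff}(y)$ using the product formula of Lemma~\mref{lem:aa}:
\[
P_{\ff}(x)\cdot P_{\ff}(y)=\spep[\sigma_{m,n}]\circ m_{\underline{m},\underline{n}}\bigl(R_{\underline{m}}(x)\ot R_{\underline{n}}(y)\bigr).
\]
The Rota-Baxter species axiom applied with $X=\underline{m}$, $Y=\underline{n}$ rewrites the inner expression $m_{\underline{m},\underline{n}}(R_{\underline{m}}(x)\ot R_{\underline{n}}(y))$ as $R_{\underline{m}\sqcup \underline{n}}$ applied to
$m_{\underline{m},\underline{n}}\bigl(R_{\underline{m}}(x)\ot y+x\ot R_{\underline{n}}(y)+\lambda\, x\ot y\bigr)$. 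Finally, because $R:\spep\to \spep$ is a morphism of species, the naturality square for the bijection $\sigma_{m,n}:\underline{m}\sqcup \underline{n}\to \underline{m+n}$ gives $\spep[\sigma_{m,n}]\circ R_{\underline{m}\sqcup \underline{n}}=R_{\underline{m+n}}\circ \spep[\sigma_{m,n}]$, which lets me pull $R$ outside $\spep[\sigma_{m,n}]$. Matching the result against the product formula identifies the three summands as $P_{\ff}(x)\cdot y$, $x\cdot P_{\ff}(y)$ and $\lambda\, x\cdot y$, yielding the Rota-Baxter identity in $\ff(\spep)$.

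For part~\eqref{it:bb2}, the main point is to check that $P_{\bff}$ is well defined on coinvariants; the Rota-Baxter identity will then be automatic. Concretely, for any $\alpha\in \mathfrak{S}_n$, the naturality of $R$ applied to $\spep[\alpha]:\spep[\underline{n}]\to \spep[\underline{n}]$ gives $R_{\underline{n}}\circ \spep[\alpha]=\spep[\alpha]\circ R_{\underline{n}}$, so $R_{\underline{n}}$ sends the subspace $\mathrm{Vect}(x-\alpha\cdot x)$ into itself and descends to $\Coinv(\spep[\underline{n}])$. Hence $P_{\bff}$ is well defined and preserves the grading.

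The canonical surjection $\pi:\ff(\spep)\twoheadrightarrow \bff(\spep)$ is an algebra morphism by Lemma~\mref{lem:graded} and satisfies $\pi\circ P_{\ff}=P_{\bff}\circ \pi$ by the definitions. Applying $\pi$ to the Rota-Baxter identity already established in $\ff(\spep)$ and using surjectivity transfers the identity to $\bff(\spep)$. I expect no serious obstacle: the entire argument is a one-line naturality statement together with the pre-existing Rota-Baxter identity in $\spep$; the only point requiring mild care is keeping track of the bijection $\sigma_{m,n}$ when transporting the identity through the product of $\ff(\spep)$.
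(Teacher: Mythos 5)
Your proposal is correct and follows essentially the same route as the paper: part (a) is the same computation (product formula, Rota-Baxter species axiom, naturality of $R$ with respect to $\sigma_{m,n}$), and your well-definedness argument for $P_{\bff}$ in part (b) is exactly the paper's verification that the ideal $I$ defining the coinvariants is stable under $P_{\ff}$, after which the identity descends to the quotient. No gaps.
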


\begin{proof}
\meqref{it:bb1}. By Lemma~\ref{lem:graded}~\ref{it:graded}, $\ff(\spep)$ is a graded algebra. For $m,n \in \mathbb{N}$ and $x \in \spep[\underline{m}], y \in \spep[\underline{n}]$, we have
\begin{align*}
\ P_{\ff}(x) \cdot P_{\ff}(y)&=\spep[\sigma_{m,n}] \circ m_{\underline{m}, \underline{n}}(R_{\underline{m}}(x) \ot R_{\underline{n}}(y))\\
&=\ \spep[\sigma_{m,n}]\big(R_{\underline{m} \sqcup \underline{n}} (R_{\underline{m}}(x) \cdot y+ x \cdot R_{\underline{n}}(y) + \lambda x \cdot y) \big)\\
&=\ P_{\ff}(P_{\ff}(x) \cdot y+ x \cdot P_{\ff}(y)+ \lambda x \cdot y).
\end{align*}
Hence $\ff(\spep)$ is a graded Rota-Baxter algebra of weight $\lambda$.

\meqref{it:bb2}. Let $I$ be the subspace of $\ff(\spep)$ generated by the elements $x -\spep[\alpha](x)$, where $x \in \spep[\underline{n}]$, $\alpha \in S_n$ with $n \geq 0$. Then $\bff(\spep) =\ff(\spep) /I$. By~\mcite{Foi}, $I$ is an ideal of the algebra $\ff(\spep)$. So we only need to show that $I$ is a Rota-Baxter ideal, i. e., $P_{\ff} (I) \subseteq I$. For any $x \in \spep[\underline{n}]$, $\alpha \in S_n$,
\begin{align*}
P_{\ff}(x- \spep[\alpha] (x))=R_{\underline{n}}(x- \spep[\alpha](x))=R_{\underline{n}}(x)- \spep[\alpha] (R_{\underline{n}}(x)) \in I.
\end{align*}
Then $\bff(\spep)$ is a graded Rota-Baxter algebra.
\end{proof}

Denote by $\mathcal{H}$ the vector space of \pure angularly decorated rooted forests whose angles are decorated by $\{ 1,2, \ldots, n\}$, where $n$ is the number of angles. Denoted by $\widehat{\mathcal{H}}$ the vector space of all angularly decorated forests whose angles are decorated by one element. Then
\begin{prop}
The vector spaces $\mathcal{H}$ and $\widehat{\mathcal{H}}$ are both graded bialgebras and graded Rota-Baxter algebras.
\end{prop}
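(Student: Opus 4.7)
The plan is to recognize $\mathcal{H}$ and $\widehat{\mathcal{H}}$ as the images of the free Rota-Baxter species $\adf$ under the full and bosonic Fock functors respectively, and then to transport the twisted bialgebra and Rota-Baxter structures already established on $\adf$ by applying the Fock-functor lemmas proved earlier in this section.

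For $\mathcal{H}$: by Definition~\mref{def:functor} we have $\ff(\adf)=\bigoplus_{n\geq 0}\adf[\underline{n}]$, and a basis of $\adf[\underline{n}]$ is precisely the set of \pure angularly decorated forests whose $n$ angles are bijectively labeled by $\{1,\ldots,n\}$. This identifies $\ff(\adf)\cong \mathcal{H}$ as a graded vector space. For $\widehat{\mathcal{H}}$: the symmetric group $S_n$ acts on $\adf[\underline{n}]$ via $\adf[\cdot]$ by relabeling angles, and since in a \pure decoration each label occurs exactly once, the orbit of a forest is determined by its underlying forest shape, equivalently by the forest viewed as having a single repeated symbol decorating every angle. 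Hence $\Coinv(\adf[\underline{n}])$ has a basis indexed by shape forests with $n+1$ leaves, and $\bff(\adf)\cong \widehat{\mathcal{H}}$ as a graded vector space.

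By Theorem~\mref{prop:tbialgebra} the species $\adf$ is a twisted bialgebra, and by Proposition~\mref{prop:trb} it is a Rota-Baxter species of weight $\lambda$. Applying Lemma~\mref{lem:aa} to both Fock functors endows $\ff(\adf)$ and $\bff(\adf)$ with graded bialgebra structures, while Lemma~\mref{lem:bb} endows them with graded Rota-Baxter algebra structures of weight $\lambda$, the Rota-Baxter operator being induced by the grafting operator $B^+$. Transporting along the isomorphisms of the previous paragraph completes the proof. The only subtle point is the second identification: one has to check that the $S_n$-action used in forming $\Coinv(\adf[\underline{n}])$ coincides with the relabeling action that quotients \pure decorations down to shape forests, and that the induced twisted product $\diamond$, coproduct $\Delta$, and Rota-Baxter operator $R$ descend to the expected operations on $\widehat{\mathcal{H}}$. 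This is essentially automatic, since the proof of Lemma~\mref{lem:bb} already verifies that the relevant subspace of relations forms a Rota-Baxter ideal, and the functoriality of $\adf$ guarantees that the product and coproduct respect relabeling.
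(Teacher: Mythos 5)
Your proposal is correct and follows essentially the same route as the paper: identify $\mathcal{H}\cong \ff(\adf)$ and $\widehat{\mathcal{H}}\cong \bff(\adf)$ as graded vector spaces, then transport the twisted bialgebra structure (Theorem~\mref{prop:tbialgebra}) and the Rota-Baxter species structure (Theorem~\mref{thm:freerb}) through Lemmas~\mref{lem:aa} and~\mref{lem:bb}. Your extra discussion of the $S_n$-coinvariants identifying \pure decorations with single-symbol decorations is a correct elaboration of a step the paper leaves implicit.
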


\begin{proof}
By Theorems~\ref{thm:freerb} and~\ref{prop:tbialgebra}, the species $\adf$ is a twisted Rota-Baxter algebra and a twisted bialgebra. Then by Lemmas~\ref{lem:aa} and~\ref{lem:bb}, $\ff(\adf)$ and $\bff(\adf)$ are graded bialgebras and graded Rota-Baxter algebras. Then by Definition~\ref{def:functor}, $\mathcal{H} \cong \ff(\adf)$ and $\widehat{\mathcal{H}} \cong \bff(\adf)$ as graded vector spaces. Hence $\mathcal{H}$ and $\widehat{\mathcal{H}}$ are both graded bialgebras and graded Rota-Baxter algebras.
\end{proof}

\begin{remark}
It is easy to check the multiplication and Rota-Baxter operator of $\widehat{\mathcal{H}}$ correspond to $\diamond$ and $B^+$, hence $\widehat{\mathcal{H}}$ agrees with the free non-commutative unitary Rota-Baxter algebra generated by one element given in~\mcite{EG,Gub}.
\end{remark}

Let $E$ be a given set. We define a specie $S_E$ in the following way:
\begin{itemize}
\item For any finite set $X$, $S_E[X]$ is the vector space generated by sequences $(e_x)_{x\in X} \in E^X$
of elements of $E$ indexed by $X$. The empty sequence is denoted by $\un\in S_E[\emptyset]$.
\item For any bijection $\sigma:X\longrightarrow Y$ of finite sets and any $(e_x)_{x\in X}\in E^X$, define
\[P[\sigma]((e_x)_{x\in X})=(e_{\sigma^{-1}(y)})_{y\in Y}.\]
\end{itemize}
This species is a twisted bialgebra, where
\begin{itemize}
\item $m_{X,Y}((e_x)_{x\in X}\otimes (f_y)_{y\in Y}))=(g_z)_{z\in X\sqcup Y}$, with
\[g_z=\begin{cases}
e_z\mbox{ if }z\in X,\\
f_z\mbox{ if }z\in Y.
\end{cases}\]
The unit is the empty sequence $\un$.
\item $\Delta_{X,Y}((e_x)_{x\in X\sqcup Y})=(e_x)_{x\in X}\otimes (e_y)_{y\in Y}$.
The counit sends the empty sequence $\un$ to $1$.
\end{itemize}

Next we recall the Hadamard product of species~\cite{AM12, Foi}.
\begin{defn}
Let $\spep$ and $\speq$ be species. The {\bf Hadamard product of $\spep$ and $\speq$} is defined by
\begin{align*}
(\spep \boxtimes \speq)[X]&:= \spep [X]\otimes \speq [X], \, \text{for any finite set $X$}\\
(\spep \boxtimes \speq )[\sigma]&:= \spep [\sigma]\otimes \speq [\sigma], \, \text{for any bijection $\sigma: X \rightarrow Y$}.
\end{align*}
\end{defn}

\begin{remark}
If $\spep$ and $\speq$ are twisted algebras (respectively coalgebras, bialgebras), then $\spep \boxtimes \speq$ is also a twisted algebra
(respectively coalgebra, bialgebra)~\mcite{AM12},  with componently defined product and coproduct.
\end{remark}

If $\spep$ and $\speq$ are two species, for any $n\in \N$,
\begin{align*}
\Coinv(\spep \boxtimes \speq)[\underline{n}]&=
\frac{\spep[\underline{n}]\otimes \speq[\underline{n}]}{\langle \spep[\sigma](p)\otimes \speq[\sigma](q)-p\otimes q\mid
p\in \spep[\underline{n}],\: q\in \speq[\underline{n}],\:\sigma \in \mathfrak{S}_n\rangle}\\
&=\frac{\spep[\underline{n}]\otimes \speq[\underline{n}]}{\langle \spep[\sigma^{-1}](p)\otimes q-p\otimes \speq[\sigma](q)\mid
p\in \spep[\underline{n}],\: q\in \speq[\underline{n}],\:\sigma \in \mathfrak{S}_n\rangle}\\
&=\frac{\spep[\underline{n}]\otimes \speq[\underline{n}]}{\langle p\cdot \sigma\otimes q-p\otimes \sigma\cdot q\mid
p\in \spep[\underline{n}],\: q\in \speq[\underline{n}],\:\sigma \in \mathfrak{S}_n\rangle}\\
&=\spep[\underline{n}]\otimes_{\mathfrak{S}_n} \speq[\underline{n}],
\end{align*}
where the right action of $\mathfrak{S}_n$ on $\spep[\underline{n}]$ is given by
\[p\cdot \sigma=\spep[\sigma^{-1}](p),\]
whereas the left action of $\mathfrak{S}_n$ on $\speq[\underline{n}]$ is given by
\[\sigma\cdot q=\speq[\sigma](q).\]

The $E$-colored Fock functor $\bff_E$ sends a species $\spep$ to
\[\bff_E(\spep)=\bff(\spep \boxtimes S_E)=\bigoplus_{n\in \N} \spep[\underline{n}]\otimes_{\mathfrak{S}_n} S_E [\underline{n}].\]
If $f:\spep \longrightarrow \speq$ is a species morphism,
for any $\overline{p\otimes (e_i)_{i\in \underline{n}}}\in \spep[\underline{n}]\otimes_{\mathfrak{S}_n} S_E [\underline{n}]$,
\[\bff_E(f)(\overline{p\otimes (e_i)_{i\in \underline{n}}})
=\overline{f(p)\otimes (e_i)_{i\in \underline{n}}} \in \bff_E(\speq).\]
If $E$ is a singleton, then $S_E[\underline{n}]$ is a trivial $\mathfrak{S}_n$-module of dimension 1 for any $n\in \N$,
and $\bff_E$ is naturally isomorphic to $\bff$.

As $S_E$ is a twisted bialgebra, we have
\begin{itemize}
\item If $\spep$ is a twisted algebra, then $\spep \boxtimes S_E$ is a twisted algebra, so $\bff_E(\spep)$ is an algebra.
\item If $\spep$ is a twisted coalgebra, then $\spep \boxtimes S_E$ is a twisted coalgebra, so $\bff_E(\spep)$ is a coalgebra.
\item If $\spep$ is a twisted bialgebra, then $\spep \boxtimes S_E$ is a twisted bialgebra, so $\bff_E(\spep)$ is a bialgebra.
\end{itemize}

Note that for $\adf$, $\bff_E(\mathrm{\adf})$ is just the vector space of angularly $E$-decorated forests. Hence, by Proposition~\ref{prop:tbialgebra}, we recover the bialgebra structure of angularly decorated forests.
\begin{coro}
The vector space $\bff_E(\mathrm{ADF})$ of angularly $E$-decorated forests, equipped with the operations defined above, is a bialgebra.
\end{coro}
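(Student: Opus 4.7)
The plan is to reduce the corollary to three earlier results that have already been established in the paper: (i) $\adf$ is a twisted bialgebra (Theorem~\ref{prop:tbialgebra}); (ii) $S_E$ is a twisted bialgebra by the explicit product and coproduct given just above; and (iii) the Hadamard product of two twisted bialgebras is a twisted bialgebra, so the bosonic Fock functor $\bff$ sends twisted bialgebras to graded bialgebras (Lemma~\ref{lem:aa}). Combining these three facts immediately yields that
\[
\bff_E(\adf)=\bff(\adf\boxtimes S_E)
\]
carries a bialgebra structure. What remains is an identification step.

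The main step is therefore to identify $\bff_E(\adf)$, as a vector space, with the space of angularly $E$-decorated forests, and to verify that the transported product and coproduct are the ones described above. Concretely, I would define a linear map
\[
\Phi:\bff_E(\adf)\longrightarrow \{\text{angularly $E$-decorated forests}\}
\]
by sending the class of $F\otimes (e_i)_{i\in\underline{n}}$, where $F$ is a simple angularly $\underline{n}$-decorated forest with $n$ angles, to the angularly $E$-decorated forest obtained by replacing each angular label $i\in\underline{n}$ in $F$ with the element $e_i\in E$. The description of $\Coinv(\adf\boxtimes S_E)[\underline{n}]$ as $\adf[\underline{n}]\otimes_{\mathfrak{S}_n}S_E[\underline{n}]$ shows that $\Phi$ is well defined: for any $\sigma\in\mathfrak{S}_n$, relabeling the angles of $F$ by $\sigma$ and correspondingly reindexing $(e_i)_{i\in\underline{n}}$ by $\sigma^{-1}$ produces the same $E$-decorated forest. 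Conversely, once a bijective identification of the $n$ angles of an $E$-decorated forest $G$ with $\underline{n}$ is chosen, $G$ produces a simple angularly $\underline{n}$-decorated forest together with a sequence in $E^{\underline{n}}$, and a different choice of identification differs by a permutation; hence $\Phi$ is a bijection.

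Finally I would check that $\Phi$ intertwines the operations. The product in $\bff(\adf\boxtimes S_E)$ is, by Lemma~\ref{lem:aa}, the concatenation-type product inherited from $m_\adf\otimes m_{S_E}$ composed with the shift $\sigma_{m,n}$; on the representative level this is precisely the product $\diamond$ on angularly decorated forests (Eq.~\eqref{eq:pro}) together with concatenation of the $E$-labeling sequences, which coincides with the multiplication of angularly $E$-decorated forests. Similarly, the coproduct inherited from $\Delta_\adf\otimes \Delta_{S_E}$ reproduces the coproduct on the free Rota-Baxter algebra on $E$, because $\Delta_{S_E}$ simply restricts a sequence $(e_x)_{x\in X\sqcup Y}$ to $(e_x)_{x\in X}\otimes (e_y)_{y\in Y}$, matching the combinatorial splitting performed by $\Delta_\adf$.

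The main obstacle, and the only part that needs care, is the bookkeeping of the $\mathfrak{S}_n$-action when transporting the product and coproduct through $\Phi$: one must verify that the shifts $\spep[\sigma_{m,n}]$ appearing in Lemma~\ref{lem:aa}\ref{it:graded} cancel the choice of angle labeling on both sides, which is exactly what the coinvariants quotient is designed to ensure. Once this identification is laid out, the bialgebra axioms for $\bff_E(\adf)$ follow at once from the twisted bialgebra axioms for $\adf\boxtimes S_E$ and the functoriality of $\bff$, completing the proof.
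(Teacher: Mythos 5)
Your proposal is correct and follows essentially the same route as the paper: the paper also deduces the corollary from the facts that $\adf$ and $S_E$ are twisted bialgebras, that the Hadamard product preserves twisted bialgebras, and that $\bff$ sends twisted bialgebras to graded bialgebras, together with the identification of $\bff_E(\adf)$ with the space of angularly $E$-decorated forests. The only difference is that you spell out the identification $\Phi$ and its compatibility with the $\mathfrak{S}_n$-coinvariants in more detail than the paper, which simply asserts it.
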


For $\overline{x \ot (e_i)_{i \in \underline{m}}} \in \adf[\underline{m}] \ot_{\mathfrak{S}_m} S_E[\underline{m}] $, define
\begin{align*}
P(\overline{x \ot (e_i)_{i \in \underline{m}}}) =\overline{R_{\underline{m}}(x) \ot (e_i)_{i \in \underline{m}}}.
\end{align*}
Then the Rota-Baxter algebra structure of angularly decorated forests is recovered:

\begin{prop}
Equipped with the above operator $P$, the vector space $\F_E(\mathrm{ADF})$ of angularly $E$-decorated forests is a Rota-Baxter algebra.
\end{prop}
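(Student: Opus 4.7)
The plan is to realize this proposition as a direct consequence of Lemma~\ref{lem:bb}(\ref{it:bb2}) applied to an appropriately chosen Rota-Baxter species. Namely, I would first equip the Hadamard product $\adf \boxtimes S_E$ with a Rota-Baxter structure of weight $\lambda$ by defining
\[
\widetilde{R} := R \boxtimes \id_{S_E} : \adf \boxtimes S_E \longrightarrow \adf \boxtimes S_E,
\qquad \widetilde{R}_X(x \ot (e_i)_{i \in X}) = R_X(x) \ot (e_i)_{i \in X}.
\]
Naturality of $\widetilde{R}$ with respect to bijections is immediate from the naturality of $R$ and $\id_{S_E}$, so $\widetilde{R}$ is a morphism of species.

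Next, I would check the Rota-Baxter identity for $\widetilde{R}$. Given $x \ot (e_i)_{i \in X}$ and $y \ot (f_j)_{j \in Y}$, the Hadamard-componentwise product gives $(x \diamond y) \ot (g_k)_{k \in X \sqcup Y}$ where $(g_k)$ is the concatenation of $(e_i)$ and $(f_j)$. Since the $S_E$-component is fixed by $\id_{S_E}$ and merges by simple concatenation, the Rota-Baxter identity for $\widetilde{R}$ decouples and reduces to the Rota-Baxter identity of weight $\lambda$ satisfied by $R$ on $\adf$, which holds by Proposition~\ref{prop:trb}. Hence $(\adf \boxtimes S_E, \widetilde{R})$ is a Rota-Baxter species of weight $\lambda$.

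With this in hand, Lemma~\ref{lem:bb}(\ref{it:bb2}) applied to $\spep = \adf \boxtimes S_E$ yields that $\bff(\adf \boxtimes S_E) = \bff_E(\adf)$ is a graded Rota-Baxter algebra of weight $\lambda$, with Rota-Baxter operator sending $\overline{x \ot (e_i)_{i \in \underline{m}}}$ to $\overline{\widetilde{R}_{\underline{m}}(x \ot (e_i)_{i \in \underline{m}})} = \overline{R_{\underline{m}}(x) \ot (e_i)_{i \in \underline{m}}}$. This coincides with the operator $P$ in the statement, giving the result. The only potentially delicate point is the well-definedness of $P$ on $\mathfrak{S}_m$-coinvariants, but this is already absorbed into Lemma~\ref{lem:bb}(\ref{it:bb2}) (and follows from $\widetilde{R}$ being a species morphism, hence $\mathfrak{S}_m$-equivariant); so no genuine obstacle remains, and the proof is essentially a packaging argument.
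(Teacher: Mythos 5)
Your proof is correct. The paper argues more directly: it verifies by hand that $P$ descends to the coinvariants $\adf[\underline{m}]\ot_{\mathfrak{S}_m}S_E[\underline{m}]$, using exactly the $\mathfrak{S}_m$-equivariance of $R$ (i.e.\ $R$ being a morphism of species) that you point to, and then asserts in a single sentence that the Rota-Baxter identity for $P$ follows from the componentwise multiplication and $\adf$ being a Rota-Baxter species. You instead first make the Hadamard product $\adf\boxtimes S_E$ into a Rota-Baxter species via $\widetilde R=R\boxtimes\id_{S_E}$ --- a verification that does decouple as you say, since the $S_E$-components merely concatenate under the product, so the identity reduces to Proposition~\ref{prop:trb} --- and then invoke Lemma~\ref{lem:bb}~\ref{it:bb2} with $\spep=\adf\boxtimes S_E$, using $\bff_E(\adf)=\bff(\adf\boxtimes S_E)$. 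This is the more systematic packaging: it parallels how the paper already obtains the bialgebra structure on $\bff_E(\adf)$ (through the Hadamard product and the Bosonic Fock functor), and it makes the terse final sentence of the paper's proof precise, since both the well-definedness of $P$ and the Rota-Baxter identity on the quotient become instances of the already-proved Lemma~\ref{lem:bb}. What the paper's version buys is brevity; what yours buys is that the only new verification is the (easy, correctly identified) check that $R\boxtimes\id_{S_E}$ satisfies the weight-$\lambda$ identity, with everything else delegated to established results.
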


\begin{proof}
We need to prove that $P$ is well-defined. For any bijection $\sigma: \underline{m} \rightarrow \underline{m}$ such that $\adf[\sigma^{-1}](x)=x'$ and $\adf[\sigma]((e_i)_{i \in \underline{m}})=(e'_i)_{i \in \underline{m}}$. By the definition of $R_{\underline{m}}$, we have $\adf[\sigma^{-1}](R_{\underline{m}}(x))=R_{\underline{m}} (\adf[\sigma^{-1}](x))= R_{\underline{m}}(x')$. Hence
\begin{align*}
P(\overline{x \ot (e_i)_{i \in \underline{m}}}) =\overline{R_{\underline{m}}(x) \ot (e_i)_{i \in \underline{m}}}=\overline{R_{\underline{m}}(x') \ot (e'_i)_{i \in \underline{m}}}=P(\overline{x' \ot (e'_i)_{i \in \underline{m}}}).
\end{align*}
Then by the multiplication of twisted algebras and $\adf$ being a $\rbs$, $\bff_E(\mathrm{ADF})$ is a Rota-Baxter algebra .
\end{proof}

\begin{remark}
Note that the elements of $\bff_E(\mathrm{ADF})$ are just the linear combinations of angularly $E$-decorated forests, and the multiplication and Rota-Baxter operator of $\bff_E(\mathrm{ADF})$ correspond to the product $\diamond$ defined in Eq.~\meqref{eq:pro} and the grafting operator $B^+$. Hence $\bff_E(\mathrm{ADF})$ corresponds the free noncommutative unitary Rota-Baxter algebra generated by the set $E$ as in~\mcite{EG,Gub}.
\end{remark}

\noindent {\bf Acknowledgments. }
This research is supported by NNSFC (12301025 and 12101316)).

\noindent
{\bf Declaration of interests. } The authors have no conflicts of interest to disclose.

\noindent
{\bf Data availability. } Data sharing is not applicable as no new data were created or analyzed.

\end{document}